\documentclass[10pt]{amsart}

\usepackage{hyperref}
\hypersetup{colorlinks=true, urlcolor=blue, citecolor=blue, linkcolor=blue}
\usepackage{enumerate}
\usepackage{amsmath,amsthm, amssymb}
\usepackage[all,cmtip]{xy}

\usepackage{graphicx,mathrsfs,tikz,fancyhdr}

\pagestyle{fancy}
%\lhead{Brian Hepler}
%\rhead{The Weight Filtration for Parameterized Hypersurfaces}

\fancyhf{}% Clear header/footer
\fancyhead[CE]{Brian Hepler}% Centred header on Even page
\fancyhead[CO]{The Weight Filtration for Parameterized Spaces}% Centred header on Odd page
\fancyhead[L]{\ifnum\value{page}<2\relax\else\thepage\fi}

\usepackage{thmtools}
\usepackage{nameref}
\usepackage[backend=bibtex]{biblatex}

\bibliography{Masseybib.bib}

%\addtolength{\voffset}{-1cm}
%\addtolength{\hoffset}{-1cm}
%\addtolength{\textwidth}{2cm}

\DeclareMathOperator{\cosupp}{cosupp}
\DeclareMathOperator{\Id}{id}
\DeclareMathOperator{\Gr}{Gr}
\DeclareMathOperator{\var}{var}

\newcommand{\U}{{\mathcal U}}
\newcommand{\0}{{\mathbf 0}}
\newcommand{\C}{{\mathbb C}}

\newcommand{\Z}{{\mathbb Z}}

\newcommand{\Q}{{\mathbb Q}}

\newcommand{\W}{{\mathcal W}}

\newcommand{\HH}{{\mathbb H}}
\newcommand{\cO}{{\mathcal O}}

\newcommand{\hyp}{{\mathbb H}}
\newcommand{\supp}{\operatorname{supp}}

\newcommand{\im}{\mathop{\rm im}\nolimits}

\newcommand{\Idot}{\mathbf {IC}^\bullet}

\newcommand{\Pdot}{\mathbf P^\bullet}

\newcommand{\Mdot}{\mathbf M^\bullet}
\newcommand{\Ndot}{\mathbf  N^\bullet}

\newcommand{\wt}{\widetilde}

\newtheorem{defn0}{Definition}[section]
\newtheorem{prop0}[defn0]{Proposition}
\newtheorem{conj0}[defn0]{Conjecture}
\newtheorem{thm0}[defn0]{Theorem}
\newtheorem{lem0}[defn0]{Lemma}
\newtheorem{corollary0}[defn0]{Corollary}
\newtheorem{example0}[defn0]{Example}
\newtheorem{remark0}[defn0]{Remark}
\newtheorem{question0}[defn0]{Question}

\newenvironment{defn}{\begin{defn0}}{\end{defn0}}
\newenvironment{prop}{\begin{prop0}}{\end{prop0}}
\newenvironment{conj}{\begin{conj0}}{\end{conj0}}
\newenvironment{thm}{\begin{thm0}}{\end{thm0}}
\newenvironment{lem}{\begin{lem0}}{\end{lem0}}
\newenvironment{cor}{\begin{corollary0}}{\end{corollary0}}
\newenvironment{exm}{\begin{example0}\rm}{\end{example0}}
\newenvironment{rem}{\begin{remark0}\rm}{\end{remark0}}
\newenvironment{ques}{\begin{question0}\rm}{\end{question0}}

\newcommand{\defref}[1]{Definition~\ref{#1}}
\newcommand{\propref}[1]{Proposition~\ref{#1}}
\newcommand{\thmref}[1]{Theorem~\ref{#1}}
\newcommand{\lemref}[1]{Lemma~\ref{#1}}
\newcommand{\corref}[1]{Corollary~\ref{#1}}
\newcommand{\exref}[1]{Example~\ref{#1}}
\newcommand{\secref}[1]{Section~\ref{#1}}
\newcommand{\subsecref}[1]{Subsection~\ref{#1}}

\begin{document}
\thispagestyle{plain}

\title{The Weight Filtration on the Constant Sheaf on a Parameterized Space}
\author{Brian Hepler}

\begin{abstract}
On an $n$-dimensional locally reduced complex analytic space $X$ on which the shifted constant sheaf $\Q_X^\bullet[n]$ is perverse, it is well-known that, locally, $\Q_X^\bullet[n]$ underlies a mixed Hodge module of weight $\leq n$ on $X$, with weight $n$ graded piece isomorphic to the intersection cohomology complex $\Idot_X$ with constant $\Q$ coefficients. In this paper, we identify the weight $n-1$ graded piece $\Gr_{n-1}^W \Q_X^\bullet[n]$ in the case where $X$ is a ``parameterized space", using the comparison complex, a perverse sheaf naturally defined on any space for which the shifted constant sheaf $\Q_X^\bullet[n]$ is perverse. 

In the case where $X$ is a parameterized surface, we can completely determine the remaining terms in the weight filtration on $\Q_X^\bullet[2]$, where we also show that the weight filtration is a local topological invariant of $X$. These examples arise naturally as affine toric surfaces in $\C^3$, images of finitely-determined maps from $\C^2$ to $\C^3$, as well as in a well-known conjecture of L\^{e} D\~{u}ng Tr\'{a}ng regarding the equisingularity of parameterized surfaces in $\C^3$.
\end{abstract}

\maketitle 
\section{Introduction}
Mixed Hodge modules (MHM) are at the intersection of many modern branches of algebraic geometry, representation theory, and mathematical physics. As a vast generalization of classical Hodge theory on the cohomology of compact K\"{a}hler varieties, MHM are built out of pairs of a filtered (regular, holonomic) $\mathcal{D}$-module $(\mathcal{M},F^\bullet \mathcal{M})$ and a $\Q$-perverse sheaf $\mathbf{K}^\bullet$ with weight filtration $W_\bullet$ subject to several compatibility conditions (e.g., $DR(\mathcal{M}) \cong \mathbf{K}^\bullet \otimes_\Q \C$ under the Riemann-Hilbert correspondence). 

One of the simplest examples of a mixed Hodge module is the \textbf{constant mixed Hodge module} $\Q_X^H$ on a non-singular complex algebraic variety (or complex analytic manifold) of pure dimension $n$. In this case, the underlying filtered left $\mathcal{D}_X$-module is just $\cO_X$ together with decreasing good filtration $F^p$ satisfying $\Gr_p^F \cO_X = 0$ if $p \neq 0$, and $\Gr_0^F \cO_X = \cO_X$. The associated perverse sheaf is $\Q_X^\bullet[n]$, with weight filtration $W_k$ satisfying $\Gr_k^W \Q_X^\bullet[n] = 0$ if $k \neq n$, and $\Gr_n^W \Q_X^\bullet[n] = \Q_X^\bullet[n]$. For \textbf{singular} complex analytic varieties, however, the constant (mixed) Hodge module is, in general, significantly harder to understand. 

Throughout this paper, we will work in the local complex analytic case, and we will only be concerned with understanding the weight filtration on the associated perverse sheaf $\Q_X^\bullet[n]$ of the constant mixed Hodge module $\Q_X^H$. In particular, for a class of complex analytic spaces called \textbf{parameterized spaces} (\defref{def:paramspace}), we determine the graded component $\Gr_{n-1}^W \Q_X^\bullet[n]$ (\thmref{thm:graded}). We give special attention to parameterized surfaces, where we completely determine the weight filtration on $\Q_X^\bullet[2]$ (\thmref{thm:weightzeropart},\thmref{thm:semispacevspace}) using down-to-earth geometric calculations, and show that this weight filtration is a local topological invariant of $X$ near a given point. (\thmref{thm:topinv}). In \subsecref{subsec:surfacesvancycle}, we are able to completely determine the monodromy weight filtration on the unipotent vanishing cycles for any parameterized surface $V(f)$ in $\C^3$.

\smallskip

 We would like to express our thanks to J\"{o}rg Sch\"{u}rmann for suggesting a simplified version of our original proofs of \propref{prop:ndotvhs} and  \thmref{thm:graded}, as well as many helpful discussions on the connection with the vanishing cycles, resulting in \secref{sec:vancycle}. \lemref{lem:getses} is due to the author and David Massey.

\bigskip

\section{Basic Notions}\label{sec:basics}

Let $\mathcal{W}$ be an open neighborhood of the origin in $\C^N$, let $X \subseteq \W$ be a (reduced) complex analytic space containing $\0$ of pure dimension $n$,  on which the (shifted) constant sheaf $\Q_X^\bullet[n]$ is perverse (e.g., if $X$ is a local complete intersection).

There is then a surjection of perverse sheaves $\Q_X^\bullet[n] \to \Idot_X \to 0$, where $\Idot_X$ is the intersection cohomology complex on $X$ with constant $\Q$ coefficients. Since the category of perverse sheaves is Abelian, we obtain a short exact sequence
\begin{equation}\label{eqn:fundses}
0 \to \Ndot_{X} \to \Q_{X}^\bullet[n] \to \Idot_{X} \to 0.
\end{equation}
The perverse sheaf $\Ndot_{X}$ is called the \textbf{comparison complex} on $X$, and was first defined by the author and David Massey in \cite{hepmasparam} and subsequently studied in several papers by the author \cite{hepdefhyper}, \cite{qhomcriterion} and Massey \cite{comparison}. 

\bigskip

By shrinking $\W$ if necessary, the perverse sheaf $\Q_X^\bullet[n]$ underlies a graded-polarizable mixed Hodge module (Prop 2.19, Prop 2.20, \cite{mixedhodgemod}) of weight $\leq n$. Moreover, by Morihiko Saito's theory of (graded polarizable) mixed Hodge modules in the local complex analytic context, the perverse cohomology objects of the usual sheaf functors naturally lift to cohomology functors in the context of (graded polarizable) mixed Hodge modules (but not on their derived category level as in the algebraic context as in Section 4 of \cite{mixedhodgemod}). Moverover, by (4.5.9) \cite{mixedhodgemod}, the quotient morphism $\Q_X^\bullet[n] \to \Idot_X$ induces an isomorphism 
$$
\Gr_n^W \Q_X^\bullet[n] \overset{\thicksim}{\to} \Idot_X;
$$
consequently, the short exact sequence (\ref{eqn:fundses}) identifies the comparison complex $\Ndot_X$ with $W_{n-1}\Q_{X}^\bullet[n]$. This then endows $\Ndot_{X}$ with the structure of a mixed Hodge module of weight $\leq n-1$ with weight filtration $W_k \Ndot_{X} = W_k \Q_{X}^\bullet[n]$ for $k \leq n-1$. 
%In this paper, we explicitly identify the graded piece $\Gr_{n-1}^W \Ndot_{X} = \Gr_{n-1}^W \Q_{X}^\bullet[n]$ in the case where the normalization of $X$ is a rational homology manifold. 

\bigskip

Before we state can state and prove our main results, we first recall a theorem of Borho and MacPherson \cite{BorhoMac} giving us several equivalent characterizations of rational homology manifolds (or, \textbf{$\Q$-homology manifolds}):

\begin{thm}([B-M])\label{thm:ICman}
The following are equivalent:
\begin{enumerate}

\item $X$ is a $\Q$-homology manifold, i.e., for all $p \in X$, 
$$
H^k(X,X\backslash \{p\};\Q) \cong 
\begin{cases}
\Q, & \text{ if $k = 2n$,} \\ 
0, & \text{ if $k \neq 2n$.}
\end{cases}
$$

\item  The natural morphism $\Q_X^\bullet[n] \to \Idot_X$ is an isomorphism.

\smallskip

\item $\mathcal{D}\left ( \Q_X^\bullet[n] \right ) \cong \Q_X^\bullet[n]$, where $\mathcal{D}$ is the Verdier duality functor.

\end{enumerate}
\end{thm}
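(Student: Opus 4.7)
The plan is to establish $(1) \Leftrightarrow (3)$ via a stalkwise computation relating local cohomology to the dualizing complex $\omega_X = \D(\Q_X)$, and then $(2) \Leftrightarrow (3)$ via the standard support/cosupport characterization of the intersection cohomology complex.

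For $(1) \Leftrightarrow (3)$, I would start from the base-change identity $i_p^!\Q_X \cong \D(i_p^*\omega_X)$ for the inclusion $i_p : \{p\} \hookrightarrow X$, which gives
\[
H^k(X, X \setminus \{p\}; \Q) \;\cong\; \mathcal{H}^k(i_p^!\Q_X) \;\cong\; \mathcal{H}^{-k}(\omega_X)_p^\vee.
\]
Thus (1) holds if and only if every cohomology stalk of $\omega_X$ is concentrated in degree $-2n$ and equal to $\Q$ there, i.e., $\omega_X \cong \mathcal{L}[2n]$ for some constructible sheaf $\mathcal{L}$ on $X$ with all stalks equal to $\Q$. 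Since $\omega_X|_{X_{reg}} \cong \Q_{X_{reg}}[2n]$ via the canonical complex orientation, and $X_{reg}$ is dense in $X$ with complement of real codimension at least two, the only such $\mathcal{L}$ is the constant sheaf $\Q_X$. Therefore (1) is equivalent to $\omega_X \cong \Q_X[2n]$, which in turn is equivalent to $\D(\Q_X^\bullet[n]) = \omega_X[-n] \cong \Q_X^\bullet[n]$, i.e., to (3).

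The implication $(2) \Rightarrow (3)$ is immediate from the self-Verdier-duality of $\Idot_X$. For $(3) \Rightarrow (2)$, I would verify directly that $\Q_X^\bullet[n]$ satisfies the defining support and cosupport conditions of the intersection cohomology complex. Because $\Q_X^\bullet[n]$ is concentrated in sheaf degree $-n$, the support condition $\dim \supp \mathcal{H}^k(\Pdot) < -k$ for $k > -n$ holds vacuously, and under (3) the dual $\D\Q_X^\bullet[n] \cong \Q_X^\bullet[n]$ satisfies the same condition. Combined with the observation that $\Q_X^\bullet[n]|_{X_{reg}} = \Q_{X_{reg}}[n] = \Idot_X|_{X_{reg}}$, uniqueness of the intermediate extension then forces the natural surjection of (\ref{eqn:fundses}) to be an isomorphism, proving (2).

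The main technical obstacle is the step identifying $\mathcal{H}^{-2n}(\omega_X)$ with the constant sheaf $\Q_X$: while standard, it requires invoking the canonical orientability of complex analytic spaces and the fact that rank-one $\Q$-local systems on $X$ are determined by their restriction to $X_{reg}$, whose complement has real codimension at least two.
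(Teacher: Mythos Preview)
The paper does not actually supply a proof of this statement: it is quoted verbatim as a result of Borho and MacPherson \cite{BorhoMac} and used as background. So there is no ``paper's own proof'' to compare against, and your write-up stands or falls on its own.

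Your argument is essentially correct. The equivalence $(2)\Leftrightarrow(3)$ is exactly the standard one: self-duality of $\Idot_X$ gives $(2)\Rightarrow(3)$, and for $(3)\Rightarrow(2)$ the fact that $\Q_X^\bullet[n]$ has cohomology only in degree $-n$ makes the \emph{strict} support condition vacuous, so self-duality immediately yields the strict cosupport condition as well; uniqueness of the intermediate extension of $\Q_{X_{\mathrm{reg}}}^\bullet[n]$ then finishes it. For $(1)\Leftrightarrow(3)$ your computation $H^k(X,X\setminus\{p\};\Q)\cong \mathcal H^{-k}(\omega_X)_p^\vee$ is the right reduction, and the implication $(3)\Rightarrow(1)$ is immediate from it.

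The only place where you should tighten the argument is precisely the point you flag: showing that a constructible sheaf on $X$ with every stalk equal to $\Q$ and restricting to $\Q_{X_{\mathrm{reg}}}$ must be the constant sheaf. Your appeal to ``rank-one local systems are determined by their restriction to $X_{\mathrm{reg}}$'' is not quite enough, since a priori $\mathcal H^{-2n}(\omega_X)$ is only constructible, not locally constant. A cleaner way to close this gap is to use the canonical morphism $\Q_X^\bullet[2n]\to\omega_X$ (equivalently $\Q_X^\bullet[n]\to\Idot_X\to\mathcal D(\Q_X^\bullet[n])$): under (1) both sides have cohomology only in degree $-2n$, the induced map on $\mathcal H^{-2n}$ is a map of constructible sheaves with one-dimensional stalks which is the identity on the dense open $X_{\mathrm{reg}}$, and one checks on costalks (using (1) again) that it is an isomorphism. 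With that adjustment your proof is complete.
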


Let $\pi :(\wt X,S) \to (X,\0)$ be the normalization of $X$, where $S := \pi^{-1}(\0)$. The normalization map is a \emph{small map} in the sense of Goresky and MacPherson \cite{inthom2}, and so there is an isomorphism $\pi_*\Idot_{\wt X} \cong \Idot_{X}$, where $\Idot_{\wt X}$ is intersection cohomology on $\wt X$ with constant $\Q$ coefficients. Thus, when the normalization is a $\Q$-homology manifold, $\Idot_{X}\cong \pi_*\Q_{\wt X}^\bullet[n]$. In this case, by taking the long exact sequence in stalk cohomology of (\ref{eqn:fundses}), we then find that $\Ndot_{X}$ has cohomology concentrated in degree $-n+1$, and in that degree, we have $\dim H^{-n+1}(\Ndot_{X})_p = |\pi^{-1}(p)|-1$.  

From this, it follows that 
$$
D_X := \supp \Ndot_{X} = \overline{\{p \in X \, | \, |\pi^{-1}(p)| > 1\}}
$$
is a purely $(n-1)$-dimensional set (it is the support of a perverse sheaf concentrated in degree $-n+1$), and $D_X \subseteq \Sigma X$. 

\bigskip

\begin{rem}\label{rem:warning}
\textbf{Throughout this paper, we will assume the normalization of $X$ is a rational homology manifold}; additionally, we will assume that $D_X = \Sigma X$, so that $\Sigma X$ will always be purely $(n-1)$-dimensional. 

 We do, however, drop this assumption in the proof of the topological invariance of the weight filtration for parameterized surfaces in \thmref{thm:topinv}. Although it is most instructive to think of $D_X$ as being the entire singular locus of $X$ for most applications, $\Sigma X$ is unfortunately not an invariant of the local topological type of $X$, whereas $D_X$ is invariant. 
 
 For example, the parameterized surface $X = V(y^2-x^3)$ in $\C^3$ is homeomorphic to $\C^2$, but $X$ is singular and $\C^2$ is clearly not. However, we do have $D_X = \emptyset$ in the first case; this distinction is crucial in the proof of \thmref{thm:topinv}.
\end{rem}

We have called such spaces $X$ with $\Q$-homology manifold normalizations \textbf{parameterized spaces} in \cite{hepmasparam},\cite{hepdefhyper}, and \cite{qhomcriterion}.

\begin{defn}\label{def:paramspace}
A reduced, purely $n$-dimensional space on which $\Q_X^\bullet[n]$ is perverse and for which the normalization $\wt X$ of $X$ is a $\Q$-homology manifold is called a \textbf{parameterized space}.
\end{defn}

\medskip

We will also use the following result throughout this paper, in which the vanishing of the cohomology sheaves of the comparison complex $\Ndot_{X}$ places strong constraints on the topology of the normalization $\wt X$. 

\begin{thm}[H., \cite{qhomcriterion}]\label{thm:qhomcriterion}
$X$ is a parameterized space if and only if $\Ndot_{X}$ has cohomology sheaves concentrated in degree $-n+1$; i.e., for all $p \in X$, $H^k(\Ndot_{X})_p$ is non-zero only possibly when $k= -n+1$.
\end{thm}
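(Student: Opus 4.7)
The plan is to analyze the long exact sequence in cohomology sheaves arising from the defining short exact sequence $0 \to \Ndot_X \to \Q_X^\bullet[n] \to \Idot_X \to 0$. Since $\Q_X^\bullet[n]$ has its only nonzero cohomology sheaf in degree $-n$ (namely $\Q_X$), this long exact sequence yields isomorphisms $H^k(\Ndot_X) \cong H^{k-1}(\Idot_X)$ for $k \notin \{-n,-n+1\}$ together with the four-term exact sequence
$$0 \to H^{-n}(\Ndot_X) \to \Q_X \to H^{-n}(\Idot_X) \to H^{-n+1}(\Ndot_X) \to 0.$$
So $\Ndot_X$ is concentrated in degree $-n+1$ if and only if $\Idot_X$ is concentrated in degree $-n$ and the canonical map $\Q_X \to H^{-n}(\Idot_X)$ is injective. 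I would reduce the original biconditional to this equivalent statement about $\Idot_X$.

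For the forward direction, if $X$ is parameterized then $\wt X$ is a $\Q$-homology manifold, so by \thmref{thm:ICman} applied to $\wt X$ one has $\Q_{\wt X}^\bullet[n] \cong \Idot_{\wt X}$, which is concentrated in degree $-n$. Smallness of the normalization gives $\Idot_X \cong \pi_*\Idot_{\wt X}$, and finiteness of $\pi$ implies $\pi_*$ commutes with taking cohomology sheaves; hence $\Idot_X$ is concentrated in degree $-n$ with $H^{-n}(\Idot_X) \cong \pi_*\Q_{\wt X}$. Under this identification the map $\Q_X \to \pi_*\Q_{\wt X}$ is the adjunction unit, which on stalks is the diagonal embedding $\Q \hookrightarrow \Q^{|\pi^{-1}(p)|}$, hence injective. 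For the reverse direction, assuming $\Ndot_X$ is concentrated in degree $-n+1$ yields $\Idot_X$ concentrated in degree $-n$; exactness of $\pi_*$ combined with surjectivity of $\pi$ then forces $\Idot_{\wt X}$ to be concentrated in degree $-n$ at every stalk.

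The only nontrivial step I foresee is deducing from this that $\wt X$ is actually a $\Q$-homology manifold. My plan is to use the standard Goresky--MacPherson stalk description of intersection cohomology: at a point $q \in \wt X$ lying on a stratum of complex codimension $c$, $H^{-n+j}(\Idot_{\wt X})_q$ computes $IH^j$ of the link $L_q$ in a normal slice for $0 \le j < c$ and vanishes for $j \ge c$. Normality of $\wt X$ forces each link to be connected, so $H^0(L_q) = \Q$; the concentration hypothesis forces $IH^j(L_q) = 0$ for $0 < j < c$; combining this vanishing with the self-duality of $\Idot_{\wt X}$ (equivalently, Poincaré--Lefschetz duality on the link viewed as a $\Q$-homology pseudomanifold) yields the matching vanishing in the complementary upper range of degrees. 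This forces each link to be a $\Q$-homology sphere of the correct dimension, which is characterization $(1)$ of \thmref{thm:ICman} applied to $\wt X$, so $\wt X$ is a $\Q$-homology manifold and $X$ is a parameterized space.
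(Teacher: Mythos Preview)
The paper does not prove this theorem; it is quoted from the author's separate paper \cite{qhomcriterion}, so there is no in-paper proof to compare against. I will therefore comment only on the correctness of your argument.

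Your overall strategy is sound. The long exact sequence reduction is correct, the forward direction is clean, and the passage from $\Idot_X$ to $\Idot_{\wt X}$ via finiteness of $\pi$ (stalks of $\pi_*$ are direct sums over fibers) is fine.

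There is, however, a gap in your final step. You show that the \emph{intersection} cohomology $IH^*(L_q)$ of each link agrees with that of a sphere, and then assert ``this forces each link to be a $\Q$-homology sphere.'' But characterization (1) of \thmref{thm:ICman} concerns the \emph{ordinary} cohomology $H^*(L_q)$, and $IH^*(L_q) \cong H^*(L_q)$ only once you know the link is itself a $\Q$-homology manifold --- which is essentially what you are trying to prove. One can close this by induction on dimension (the link is a lower-dimensional pseudomanifold whose IC sheaf is again concentrated in one degree), but you do not set this up.

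A much shorter route bypasses the link computation entirely. Once you know $\Idot_{\wt X}$ is concentrated in degree $-n$, normality of $\wt X$ gives $H^{-n}(\Idot_{\wt X}) \cong \Q_{\wt X}$: the stalk at any $q$ is $IH^0$ of a small contractible neighborhood, which is $\Q$ because the link is connected. Hence the natural morphism $\Q_{\wt X}^\bullet[n] \to \Idot_{\wt X}$ is a quasi-isomorphism (both sides have a single nonzero cohomology sheaf, and the induced map on it is an isomorphism on stalks). Now invoke characterization (2) of \thmref{thm:ICman} directly to conclude that $\wt X$ is a $\Q$-homology manifold.
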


 Letting $\Sigma X$ denote the singular locus of $X$, and let $i : \Sigma X \hookrightarrow X$. We can then find a smooth, Zariski open dense subset $\U \subseteq \Sigma X$ over which the normalization map restricts to a covering projection $\hat{\pi} : \pi^{-1}(\U) \to \U \subseteq \Sigma X$ (see Section 6.2, \cite{inthom2}). Let $l : \U \hookrightarrow \Sigma X$ and $m : \Sigma X \backslash \U \hookrightarrow \Sigma X$ denote the respective open and closed inclusion maps. Let $\hat{m} := i \circ m$, $\hat{l} : = i \circ l$. Note that $\dim_\0 \Sigma X \backslash \U \leq n-2$, as it is the complement of a Zariski open set (we will need this later in \propref{prop:ndotsupp}).
 
 \begin{exm}
Consider the Whitney umbrella $V(f) \subseteq \C^3$ with $f(x,y,z) = y^2-x^3-zx^2$. Then, the normalization of $V(f)$ is smooth, and given by the map $\pi(u,t) = (u^2-t,u(u^2-t),t)$. 

The critical locus of $f$ is $\Sigma f = V(x,y)$, and it is easy to see that over $\Sigma f \backslash \{\0\}$, $\pi$ is a 2-to-1 covering map; thus, we set $\U = \Sigma f \backslash \{\0\}$.
 \end{exm}
 
 \begin{exm}\label{exm:surfacecase}
Suppose $V(f) \subseteq \C^3$ is a parameterized surface with $\dim_\0 \Sigma f =1$. Then, it is easy to see that $\U = \Sigma f \backslash \{\0\}$; this follows from the fact that ${\Idot_{V(f)}}_{|_{\Sigma f}}$ is is constructible with respect to the Whitney stratification $\{\Sigma f \backslash \{\0\},\{\0\}\}$ of $\Sigma f$, along with the description of the stalk cohomology of $\Idot_{V(f)}$ given by the isomorphism $\Idot_{V(f)} \cong \pi_*\Q_{\wt {V(f)}}^\bullet[2]$. 

We will examine this setting in more detail in \secref{sec:surfaces}.
 \end{exm}

 \bigskip

%Then, $\Z_{\pi^{-1}(\Sigma f)}^\bullet[n-1]$ is a perverse sheaf on $\C^{n-1}$, and since $\pi$ is a finite map, the pushforward of the restriction $\hat{\pi}_*\Z_{\pi^{-1}(\Sigma f)}^\bullet[n-1]$ is a perverse sheaf on $\Sigma f$, where $\hat{\pi} : \pi^{-1}(\Sigma f) \to \Sigma f$. 

%It is well-known that, since $Y$ is a rational homology manifold, the natural morphism $\Q_Y^\bullet[n] \overset{\thicksim}{\to} \Idot_Y$ is an isomorphism, where $\Idot_Y$ is intersection cohomology with constant $\Q$ coefficients \cite{borhomac}. Since the normalization map $\pi$ is finite and generically one-to-one, it is easy to verify that $\pi_*\Q_Y^\bullet[n] \cong \Idot_{V(f)}$ is intersection cohomology on $V(f)$ with constant $\Q$ coefficients.
\section{Main Result}

In this section, we first prove a general result, \lemref{lem:getses}, about perverse sheaves that will allow us to construct the short exact sequence mentioned in \thmref{thm:graded}, and that $\Ndot_{X}$ satisfies the hypotheses of this lemma. Then, we examine the weight filtration on $\Idot_{\Sigma X}(\hat{l}^*\Ndot_{X})$ and show that it underlies a polarizable Hodge module of weight $n-1$ in \propref{prop:ndotvhs}. With all this, we can state and prove \thmref{thm:graded} and \corref{cor:actualthm}. 

\bigskip

Recall the category of perverse sheaves $Perv(X)$ is the Abelian subcategory of the bounded derived category of $\C$-constructible sheaves $D_c^b(X)$ given by the heart of the \textbf{perverse $t$-structure}, $Perv(X) = {}^p D^{\leq 0}(X) \cap {}^p D^{\geq 0}(X)$. Here, 
\begin{itemize}

\item $\Pdot \in {}^p D^{\leq 0}(X)$ if $\Pdot$ satisfies the \textbf{support condition}: for all $k \in \Z$, 
$$
\dim_\C \supp H^k(\Pdot) \leq -k.
$$

\item $\Pdot \in {}^p D^{\geq 0}(X)$ if $\mathcal{D}\Pdot$ satisfies the support condition, where again $\mathcal{D}$ denotes the Verdier duality functor. More precisely, for all $k \in \Z$,
$$
\dim_\C \overline{ \{p \in X \, | \, H^k(i_p^! \Pdot) \neq 0 \}} \leq k,
$$
where $i_p : \{p\} \hookrightarrow X$. This is known as the \textbf{cosupport condition}.

\end{itemize}. 

The following lemma is necessary to construct the short exact sequence appearing in \thmref{thm:graded}, although it is a much more general result about arbitrary perverse sheaves on analytic spaces.

\begin{lem}\label{lem:getses}
Suppose $X$ is a complex analytic space, $\Pdot$ a perverse sheaf on $X$, $l: \U \hookrightarrow X$ a Zariski open subset and $m : Z = X \backslash \U \hookrightarrow X$ its closed analytic complement. Then, if $m^*[-1]\Pdot \in {}^p D^{\leq 0}(Z)$, there is a short exact sequence
$$
0 \to m_*{}^p H^0(m^!\Pdot) \to \Pdot \to \Idot_X(l^*\Pdot) \to 0
$$
of perverse sheaves on $X$, where $\Idot_X(l^*\Pdot) := \im {}^p H^0(l_!l^*\Pdot \to l_*l^*\Pdot)$ denotes the intermediate extension of $l^*\Pdot$ to all of $X$. 
\end{lem}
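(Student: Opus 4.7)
The plan is to combine the two standard localization distinguished triangles for the open/closed decomposition $X = \U \sqcup Z$, namely
\[
l_! l^* \Pdot \to \Pdot \to m_* m^* \Pdot \xrightarrow{+1}
\]
and
\[
m_* m^! \Pdot \to \Pdot \to Rl_* l^* \Pdot \xrightarrow{+1},
\]
and to extract the claimed short exact sequence from their perverse cohomology long exact sequences. The first step is to unpack the hypothesis into a statement in the perverse $t$-structure: using ${}^p H^i(\Adot[-1]) = {}^p H^{i-1}(\Adot)$, the condition $(m^*\Pdot)[-1] \in {}^p D^{\leq 0}(Z)$ is equivalent to $m^*\Pdot \in {}^p D^{\leq -1}(Z)$, i.e.\ ${}^p H^0(m^*\Pdot) = 0$.

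Applying perverse cohomology to the first triangle and looking around degree $0$, one reads off
\[
{}^p H^0(l_! l^*\Pdot) \to \Pdot \to {}^p H^0(m_* m^*\Pdot) \to {}^p H^1(l_! l^*\Pdot).
\]
Since $l$ is an open immersion, $l^*\Pdot$ is perverse on $\U$, and right $t$-exactness of $l_!$ gives ${}^p H^1(l_! l^*\Pdot)=0$; meanwhile, $t$-exactness of $m_*$ combined with the unpacked hypothesis gives ${}^p H^0(m_* m^*\Pdot) = m_*{}^p H^0(m^*\Pdot) = 0$. Consequently, the canonical adjunction map ${}^p H^0(l_! l^*\Pdot) \twoheadrightarrow \Pdot$ is surjective.

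Passing to the second triangle, left $t$-exactness of $m^!$ and $Rl_*$ together with $t$-exactness of $m_*$ produce a left-exact sequence
\[
0 \to m_* {}^p H^0(m^! \Pdot) \to \Pdot \xrightarrow{\phi} {}^p H^0(Rl_* l^* \Pdot).
\]
To finish, I would identify $\im \phi$ with $\Idot_X(l^*\Pdot)$: the composition $l_! l^*\Pdot \to \Pdot \to Rl_* l^*\Pdot$ is the canonical adjunction morphism $l_! l^*\Pdot \to Rl_* l^*\Pdot$, whose ${}^p H^0$-image is by definition $\Idot_X(l^*\Pdot)$; since the first arrow in this composition becomes surjective on ${}^p H^0$ by the previous paragraph, $\im\phi$ coincides with this image. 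Assembling the pieces yields the claimed short exact sequence. The only nontrivial step in the whole argument is using the hypothesis to secure the surjectivity ${}^p H^0(l_! l^*\Pdot) \twoheadrightarrow \Pdot$ — the rest is routine diagram-chasing with the six-functor formalism.
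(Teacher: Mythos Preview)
Your argument is correct and is essentially the same as the paper's: both use the two localization triangles, deduce from the hypothesis that ${}^p H^0(m_*m^*\Pdot)=0$ to get surjectivity of ${}^p H^0(l_!l^*\Pdot)\to\Pdot$, and then use the factorization of the canonical map $l_!l^*\Pdot\to Rl_*l^*\Pdot$ through $\Pdot$ to identify the image of $\Pdot\to{}^pH^0(Rl_*l^*\Pdot)$ with the intermediate extension. The only cosmetic differences are that you spell out the $t$-exactness properties explicitly (and mention the superfluous vanishing of ${}^pH^1(l_!l^*\Pdot)$), while the paper instead cites references for the fact that $m_*m^!\Pdot,\,l_*l^*\Pdot\in{}^pD^{\geq 0}(X)$.
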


\begin{proof}
The natural morphism ${}^p H^0(l_!l^*\Pdot) \to {}^p H^0(l_*l^*\Pdot)$ factors as 
$$
{}^p H^0(l_!l^*\Pdot) \overset{\alpha}{\to} \Pdot \overset{\beta}{\to} {}^p H^0(l_*l^*\Pdot).
$$
 From the other natural distinguished triangle associated to this pair of subsets, 
$$
l_!l^*\Pdot \to \Pdot \to m_*m^*\Pdot \overset{+1}{\to},
$$
we see that surjectivity of $\alpha$ follows from the vanishing of 
$$
{}^p H^0(m_*m^*\Pdot) \cong m_*{}^p H^0(m^*\Pdot).
$$
By assumption, $m^*[-1]\Pdot \in {}^p D^{\leq 0}(Z)$, so that ${}^p H^k(m^*[-1]\Pdot) = 0$ for all $k >0$. Thus,
$$
{}^p H^0(m^*\Pdot) \cong {}^p H^1(m^*[-1]\Pdot) = 0;
$$
hence, $\alpha$ is surjective, and we have $\im \beta = \im (\beta \circ \alpha) \cong \Idot_X(l^*\Pdot)$. We then obtain the isomorphism $\Idot_X(l^*\Pdot) \cong \im \{ \Pdot \to {}^p H^0(l_*l^*\Pdot) \}$. 

Finally, the result follows from the long exact sequence in perverse cohomology associated to the distinguished triangle
$$
m_*m^!\Pdot \to \Pdot \to l_*l^*\Pdot \overset{+1}{\to},
$$
since $m_*m^!\Pdot \in {}^p D^{\geq 0}(X)$ and $l_*l^*\Pdot \in {}^p D^{\geq 0}(X)$,  (see, e.g.,  Proposition 10.3.3 of \cite{kashsch}, or Theorem 5.2.4 of \cite{dimcasheaves}). 
\end{proof}

\bigskip

From the introduction, let $\Sigma X$ denote the singular locus of $X$, and let $i : \Sigma X \hookrightarrow X$. We can then find a smooth, Zariski open dense subset $\U \subseteq \Sigma X$ over which the normalization map restricts to a covering projection $\hat{\pi} : \pi^{-1}(\U) \to \U \subseteq \Sigma X$ (see Section 6.2, \cite{inthom2}). Let $l : \U \hookrightarrow \Sigma X$ and $m : \Sigma X \backslash \U \hookrightarrow \Sigma X$ denote the respective open and closed inclusion maps. Let $\hat{m} := i \circ m$, $\hat{l} : = i \circ l$. Note that $\dim_\0 \Sigma X \backslash \U \leq n-2$, as it is the complement of a Zariski open set.

%From the introduction, we have the inclusions $\hat{m} : \Sigma f \backslash \U \hookrightarrow V(f)$ and $\hat{l} : \U \hookrightarrow V(f)$, which give the distinguished triangle
%$$
%m_*m^!i^*\Ndot_{V(f)} \to i^*\Ndot_{V(f)} \to l_*\hat{l}^*\Ndot_{V(f)} \overset{+1}{\to},
%$$
%with $i^*\Ndot_{V(f)}$ perverse on $\Sigma f$, since the restriction of a perverse sheaf to its support is still perverse. We will use \lemref{lem:getses} on the above distinguished triangle to get the short exact sequence of perverse sheaves mentioned in \thmref{thm:graded}. 
%\medskip

\begin{prop}\label{prop:ndotsupp}
If $X$ is a parameterized space, then $\hat{m}^*[-1]\Ndot_{X} \in {}^p D^{\leq 0}(\Sigma X \backslash \U)$.
\end{prop}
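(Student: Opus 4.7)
The plan is to reduce the statement to a straightforward check of the support condition once we use \thmref{thm:qhomcriterion} to control the cohomology of $\Ndot_X$. Since $X$ is a parameterized space, \thmref{thm:qhomcriterion} tells us that $\Ndot_X$ has cohomology sheaves concentrated in a single degree, namely $-n+1$. This is the key input; the rest should be bookkeeping.

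First, I would unpack $\hat m^*[-1]\Ndot_X$. Because $\hat m^*$ is $t$-exact for the standard $t$-structure on sheaves, we have
$$
H^k(\hat m^*[-1]\Ndot_X) \;\cong\; \hat m^* H^{k-1}(\Ndot_X),
$$
which vanishes unless $k-1 = -n+1$, i.e.\ unless $k=-n+2$. So $\hat m^*[-1]\Ndot_X$ has at most one nonzero cohomology sheaf, sitting in degree $-n+2$.

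Next I would verify the support condition. For $k \neq -n+2$ there is nothing to check. For $k = -n+2$, the condition demands
$$
\dim_{\C}\supp H^{-n+2}\bigl(\hat m^*[-1]\Ndot_X\bigr) \;\leq\; n-2.
$$
But the support of this cohomology sheaf lies inside $\Sigma X \setminus \U$, and this set has complex dimension at most $n-2$ (as noted just before the proposition, being the complement in $\Sigma X$ of a Zariski open dense subset). This gives exactly the desired inequality.

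There is no real obstacle here; the proof essentially consists of invoking \thmref{thm:qhomcriterion} to concentrate the cohomology of $\Ndot_X$ in degree $-n+1$, shifting by $[-1]$ to land in degree $-n+2$, and observing that the dimension bound on $\Sigma X \setminus \U$ matches the bound required by the perverse support condition. The only subtlety worth flagging is that $\hat m^*$ is being applied to a perverse sheaf on $X$ (not on $\Sigma X$), but since $\Ndot_X$ is supported on $\Sigma X = D_X$ by the standing assumption in \remref{rem:warning}, its ordinary cohomology sheaves behave the same way before and after pulling back along $i$, and then along $m$.
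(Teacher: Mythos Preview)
Your proposal is correct and follows essentially the same route as the paper's proof: invoke \thmref{thm:qhomcriterion} to concentrate the cohomology of $\Ndot_X$ in degree $-n+1$, shift to degree $-n+2$, and then use $\dim_\C(\Sigma X \setminus \U) \leq n-2$ to verify the support condition. The only differences are cosmetic---you make the $t$-exactness of $\hat m^*$ for the standard $t$-structure explicit and flag the standing assumption from \remref{rem:warning}, whereas the paper leaves these implicit.
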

\begin{proof}
%By \thmref{thm:qhomcriterion}, the stalk cohomology of $\Ndot_{V(f)}$ is concentrated in degree $-n+1$, and $\dim_\0 \Sigma f = n-1$ (note that this still allows for $\Sigma f$ to be empty). 

We wish to show that for all $k \in \Z$, 
$$
\dim_\C \supp H^k(\hat{m}^*[-1]\Ndot_{X}) \leq -k.
$$
However, $\supp H^k(\hat{m}^*[-1]\Ndot_{X})$ is non-empty only for $k-1 = -n+1$, i.e., when $k = -n+2$ (by \thmref{thm:qhomcriterion}). In this degree, the support is equal to $\Sigma X \backslash \U$. Since this set is the complement of a Zariski open dense subset of $\Sigma X$, 
$$
\dim_\C \supp H^{-n+2}(\hat{m}^*\Ndot_{X}) \leq n-2,
$$
as desired.
\end{proof}

\begin{rem}
For surfaces $X$ with curve singularities, $\hat{m}^*[-1]\Ndot_X \in {}^p D^{\leq 0}(\Sigma X \backslash \U)$ if and only if $X$ is parameterized (see \secref{sec:surfaces}).

In general, $\hat{m}^*[-1]\Ndot_{X} \in {}^p D^{\leq 0}(\Sigma X \backslash \U)$ places strict constraints on the possible cohomology groups of the \textbf{real link} of $X$ at different points $p \in \Sigma X$, denoted $K_{X,p}$, i.e., the intersection of $X$ with a sphere of sufficiently small radius at $p$. 
%{\color{red} Maybe say something about $V(f)$ with $2$-dimensional critical locus?}
\end{rem}

\bigskip

%By \lemref{lem:getses} and \propref{prop:ndotsupp}, we have obtain the desired short exact sequence of perverse sheaves on $\Sigma f$:
%$$
%0 \to m_*{}^p H^0(m^!i^*\Ndot_{V(f)}) \to i^*\Ndot_{V(f)} \to \Idot_{\Sigma f}(\hat{l}^*\Ndot_{V(f)}) \to \0.
%$$
%However, before we can prove \thmref{thm:graded}, we must first understand the weight filtration on $\Idot_{\Sigma f}(\hat{l}^*\Ndot_{V(f)})$. 

\begin{rem}\label{rem:saitodurfee}
Generically along an irreducible component $C$ of $\Sigma X$, $\Ndot_{X}$ is isomorphic to a local system $\hat{l}^*({\Ndot_{X}}_{|_C})$ in degree $-n+1$, and in that degree, we have
$$
H^{-n+1}(\Ndot_{X})_p \cong \widetilde \HH^{-n}(K_{X,p};\Idot_{X}),
$$
where $\widetilde \HH$ denotes reduced hypercohomology, This description follows immediately from short exact sequence (\ref{eqn:fundses}). Since $\Idot_{X} \cong \pi_*\Q_{\wt X}^\bullet[n]$, this reduced hypercohomology is actually just 
\begin{align*}
\widetilde \HH^{-n}(K_{X,p};\Idot_{X}) \cong \widetilde H^0(K_{\wt X,\pi^{-1}(p)};\Q),
\end{align*}
where 
$$
K_{\wt X,\pi^{-1}(p)} = \bigcup_{q \in \pi^{-1}(p)} K_{\wt X,q}.
$$
Since $\wt X$ is normal (and thus locally irreducible) it is clear that one has $H^0(K_{\wt X,q};\Q) \cong \Q$ for all $q \in \wt X$. After noting that $H^{-n}(\Idot_{X})_p = IH^0(K_{X,p})$ (that is, intersection cohomology of $K_{X,p}$ with topological indexing), $H^{-n}(\Idot_{X})_p$ has a pure Hodge structure of weight $0$ (see, e.g., A. Durfee and M. Saito \cite{durfee1990}). 
\end{rem}
\medskip

\begin{prop}\label{prop:ndotvhs}
Let $C$ be an irreducible component of $\Sigma X$ at $\0$. Then, $\hat{l}^*({\Ndot_{X}}_{|_C})$ underlies a polarizable variation of Hodge structure of weight $0$. 

Consequently, $\Idot_{\Sigma X}(\hat{l}^*\Ndot_{X})$ underlies a polarizable Hodge module of weight $n-1$ on $\Sigma X$.
\end{prop}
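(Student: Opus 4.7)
The plan is to combine Saito's correspondence between smooth mixed Hodge modules and admissible variations of mixed Hodge structure with the weight-$0$ identification of stalks from \remref{rem:saitodurfee}, and then to pass to the intermediate extension.

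By \thmref{thm:qhomcriterion}, the cohomology sheaves of $\Ndot_{X}$ are concentrated in degree $-n+1$, so on the smooth locus $\U \subseteq \Sigma X$ (which is purely of dimension $n-1$) the restriction $\hat{l}^*\Ndot_{X}$ is of the form $L[n-1]$ for some $\Q$-local system $L$ on $\U$; restricting to $C \cap \U$ gives the local system underlying $\hat{l}^*(\Ndot_{X}|_C)$. Since $\Ndot_{X} = W_{n-1}\Q_{X}^\bullet[n]$ inherits the graded-polarizable MHM structure of the ambient $\Q_X^\bullet[n]$, and pullback along the open immersion $\hat{l}$ preserves MHM structure without raising weights, $\hat{l}^*\Ndot_{X}$ is a mixed Hodge module on $\U$ of weight $\leq n-1$ whose underlying perverse sheaf is smooth in Saito's sense. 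By the correspondence between smooth MHMs on a complex manifold and admissible graded-polarizable variations of mixed Hodge structure \cite{mixedhodgemod}, the local system $L$ canonically inherits the structure of an admissible graded-polarizable VMHS.

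Next, I would appeal to \remref{rem:saitodurfee}: for $p \in \U$, the stalk $L_p \cong \widetilde H^0(K_{\wt X,\pi^{-1}(p)};\Q)$ is the reduced $H^0$ of a disjoint union of connected normal local links, which carries a pure polarizable Hodge structure of weight $0$ by Durfee--Saito \cite{durfee1990}. An admissible graded-polarizable VMHS all of whose fibers are pure of the same weight has trivial weight filtration and is therefore a polarizable VHS of that weight. Hence $L$, and so $\hat{l}^*(\Ndot_{X}|_C)$, underlies a polarizable VHS of weight $0$.

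For the second assertion, Saito's theory identifies the intermediate extension of a polarizable VHS of weight $w$ on $\U$ (viewed as a perverse sheaf via the shift by $\dim \U = n-1$) with a pure polarizable Hodge module of weight $w + (n-1)$ on $\Sigma X$; specializing to $w=0$ yields that $\Idot_{\Sigma X}(\hat{l}^*\Ndot_{X})$ is a pure polarizable Hodge module of weight $n-1$. The principal obstacle is the invocation of Saito's foundational correspondence between smooth MHMs and admissible VMHSs, together with the pointwise-purity-to-global-purity step; once these standard tools are in hand, the remainder is bookkeeping with weights and degree shifts.
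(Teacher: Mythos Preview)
Your proposal is correct and follows essentially the same approach as the paper: invoke Saito's correspondence between smooth mixed Hodge modules and admissible variations of mixed Hodge structure (Theorem 3.27 of \cite{mixedhodgemod}), then verify purity of weight~$0$ on stalks. The only difference is in the stalk argument: the paper works entirely inside the MHM framework by applying $i_p^*$ to the short exact sequence (\ref{eqn:fundses}) to obtain a short exact sequence of mixed Hodge structures and then uses finiteness of $\pi$ to see directly that $H^{-n}(\Idot_X)_p \cong \bigoplus_{y\in\pi^{-1}(p)}\Q_{\{y\}}$ is pure of weight~$0$ (whence so is its quotient), whereas you route this through \remref{rem:saitodurfee} and Durfee--Saito; the paper's version is slightly cleaner in that it avoids any implicit appeal to compatibility between the Durfee--Saito Hodge structure on $IH^0(K_{X,p})$ and the one induced by Saito's MHM formalism.
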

\begin{proof}
Since $\hat{l}^*\Ndot_{X}$ underlies a mixed Hodge module whose underlying perverse sheaf is a local system (up to a shift) on the complex manifold $\U$, this local system underlies an admissable graded polarizable variation of mixed Hodge structures on $\U$ by Theorem 3.27 \cite{mixedhodgemod}. 

To show that this mixed Hodge structure is pure of weight zero, we can check on stalks at points $p \in \U$. Let $i_p : \{p\} \hookrightarrow \U$; then, the stalk cohomology $H^k(-)_p$ agrees with perverse cohomology ${}^p H^k(i_p^*)$. So, applying $H^k(i_p^*)$ on the level of mixed Hodge modules to the short exact sequence (\ref{eqn:fundses}), we get by Proposition 2.19, Proposition 2.20, and Theorem 3.9 of \cite{mixedhodgemod} a short exact sequence in the category of graded polarizable mixed Hodge structures, whose underlying sequence of vector spaces is
\begin{equation}\label{eqn:vectses}
0 \to \Q_{\{p\}} \to H^{-n}(\Idot_{X})_p \to H^{-n+1}(\Ndot_{X})_p \to 0.
\end{equation}
However, $\pi :\wt X \to X$ is a finite map, and therefore exact for the perverse $t$-structure (and mixed Hodge modules), with 
$$
H^{-n}(\Idot_{X})_p \cong H^{-n}(\pi_*\Q_{X}^\bullet[n])_p \cong \bigoplus_{y \in \pi^{-1}(p)} \Q_{\{y\}}.
$$
Since this stalk is pure of weight zero, the surjection in (\ref{eqn:vectses}) implies $H^{-n}(\Ndot_{X})_p$ is also pure of weight zero.

\end{proof}

\bigskip

From the introduction, we have the inclusions $\hat{m} : \Sigma X \backslash \U \hookrightarrow X$ and $\hat{l} : \U \hookrightarrow X$, which give the distinguished triangle
$$
m_*m^!i^*\Ndot_{X} \to i^*\Ndot_{X} \to l_*\hat{l}^*\Ndot_{X} \overset{+1}{\to}.
$$
By \lemref{lem:getses}, \propref{prop:ndotsupp}, and \propref{prop:ndotvhs} we now have a short exact sequence of perverse sheaves coming from a short exact sequence of mixed Hodge modules (Corollary 2.20 \cite{mixedhodgemod})
\begin{equation}\label{eqn:grses}
0 \to m_*{}^p H^0(m^!i^*\Ndot_{X}) \to i^*\Ndot_{X} \to \Idot_{\Sigma X}(\hat{l}^*\Ndot_{X}) \to 0
\end{equation}
where $i^*\Ndot_{X}$ has weight $\leq n-1$ (recall $\Ndot_{X}$ has weight $\leq n-1$, and $i^*$ does not increase weights \cite{peterssteenbrink} pg. 340), and $\Idot_{\Sigma X}(\hat{l}^*\Ndot_{X})$ has weight $n-1$. Since a short exact sequence of mixed Hodge modules is strictly compatible with the weight filtration, and the functor $\Gr_{n-1}^W$ is exact on the Abelian category of polarizable mixed Hodge modules, we have the short exact sequence of mixed Hodge modules and their underlying perverse sheaves
$$
0 \to \Gr_{n-1}^W m_*{}^p H^0(m^!i^*\Ndot_{X}) \to \Gr_{n-1}^W i^*\Ndot_{X} \to \Idot_{\Sigma X}(\hat{l}^*\Ndot_{X}) \to 0.
$$

\bigskip

We can now state and prove our main result.

%We now wish to show that $m^*[-1]\Ndot_{V(f)} \in {}^p D^{\leq 0}(Z)$, i.e., this complex satisfies the support condition along $Z$. With this, we would obtain the short exact sequence of perverse sheaves on $\Sigma f:$
%$$
%0 \to m_!{}^p H^0(m^!\Ndot_{V(f)}) \to \Ndot_{V(f)} \to \Idot_{\Sigma f}(l^*\Ndot_{V(f)}) \to 0.
%$$
%It is easy to see that $m^*[-1]\Ndot_{V(f)}$ satisfies the support condition along $Z$; indeed, this complex has cohomology sheaves concentrated in degree $-n+2$, and since $\U$ is Zariski-open in $\Sigma f$, the dimension of the set of points over which $\hat{\pi}$ is not a covering map is at most $n-2$. Consequently, 
%$$
%{}^p H^0(m^*\Ndot_{V(f)}) \cong {}^p H^1(m^*[-1]\Ndot_{V(f)}) = 0,
%$$
%since $m^*[-1]\Ndot_{V(f)} \in {}^p D^{\leq 0}(Z)$. 

%\begin{conj}
%The perverse sheaf $m_!{}^p H^0(m^!\Ndot_{V(f)})$ has weight $\leq n-2$, i.e.,
%$$
%m_!{}^p H^0(m^!\Ndot_{V(f)}) \cong W_{n-2}\Ndot_{V(f)} = W_{n-2}\Q_{V(f)}^\bullet[n].
%$$
%\end{conj}

%Or, equivalently,

%\begin{conj}
%$$
%\Gr_{n-1}^W\Q_{V(f)}^\bullet[n] \cong \Gr_{n-1}^W \Ndot_{V(f)} \cong \Idot_{\Sigma f}(l^*\Ndot_{V(f)}).
%$$
%\end{conj}
%Here, we note that $\Idot_{\Sigma f}(l^*\Ndot_{V(f)})$ splits up as a direct sum over the irreducible components of $\Sigma f$ and that, along such a component, $l^*\Ndot_{V(f)}$ is a local system concentrated in degree $-n+1$. Moreover, it is a (polarizable?) variation of Hodge structure of weight zero, so that $\Idot_{\Sigma f}(l^*\Ndot_{V(f)})$ underlies a (polarizable?) Hodge module that is pure of weight $n-1$. 

\begin{thm}\label{thm:graded}
Suppose $X$ is a parameterized space. Then, there is an isomorphism $\Gr_{n-1}^W i^*\Ndot_{X} \cong \Idot_{\Sigma X}(\hat{l}^*\Ndot_{X})$, so that the short exact sequence of perverse sheaves on $X$
\begin{equation}\label{eqn:gradedses}
0 \to m_*{}^p H^0(m^!i^*\Ndot_{X}) \to i^*\Ndot_{X} \to \Idot_{\Sigma X}(\hat{l}^*\Ndot_{X}) \to 0
\end{equation}
identifies $W_{n-2}i^*\Ndot_{X} \cong m_*{}^p H^0(m^!i^*\Ndot_{X})$. 
\end{thm}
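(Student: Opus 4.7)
The plan is to reduce the theorem to the single vanishing $\Gr_{n-1}^W m_*{}^p H^0(m^!i^*\Ndot_X) = 0$. Once this is known, the post-$\Gr_{n-1}^W$ short exact sequence displayed just before the theorem statement gives the isomorphism $\Gr_{n-1}^W i^*\Ndot_X \cong \Idot_{\Sigma X}(\hat{l}^*\Ndot_X)$, and comparing (\ref{eqn:gradedses}) with the canonical weight-filtration short exact sequence $0 \to W_{n-2}i^*\Ndot_X \to i^*\Ndot_X \to \Gr_{n-1}^W i^*\Ndot_X \to 0$---whose right-hand quotient map agrees with that in (\ref{eqn:gradedses}) under the isomorphism---identifies $W_{n-2}i^*\Ndot_X$ with $m_*{}^p H^0(m^!i^*\Ndot_X)$.

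To obtain the vanishing, I would first apply Saito's decomposition theorem to the pure weight $n-1$ polarizable mixed Hodge module $\Gr_{n-1}^W i^*\Ndot_X$, writing it as a direct sum of intermediate extensions $\Idot_{Z_\alpha}(\mathbb{V}_\alpha)$ of polarizable VHS $\mathbb{V}_\alpha$ of weight $n-1-\dim Z_\alpha$ along irreducible closed subvarieties $Z_\alpha \subseteq \Sigma X$. Restriction to $\U$ commutes with $\Gr_{n-1}^W$ and returns $\hat{l}^*\Ndot_X$ (itself pure of weight $n-1$ by \propref{prop:ndotvhs}), so the collection of ``full-support'' summands---those with $Z_\alpha$ an irreducible component of $\Sigma X$---must collectively equal $\Idot_{\Sigma X}(\hat{l}^*\Ndot_X)$, while the remaining summands are all supported on $\Sigma X \setminus \U$ (a set of dimension $\leq n-2$). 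Bundling the latter into a single pure weight $n-1$ perverse sheaf $\mathbf{T}^\bullet$, we get
\begin{equation*}
\Gr_{n-1}^W i^*\Ndot_X \cong \Idot_{\Sigma X}(\hat{l}^*\Ndot_X) \oplus \mathbf{T}^\bullet,
\end{equation*}
and the theorem reduces to the vanishing $\mathbf{T}^\bullet = 0$.

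This last vanishing is the main obstacle, and I would settle it via a $\Hom$-vanishing computation exploiting \thmref{thm:qhomcriterion}. Composing the surjection $i^*\Ndot_X \twoheadrightarrow \Gr_{n-1}^W i^*\Ndot_X$ with projection onto the direct summand $\mathbf{T}^\bullet$ yields a surjection of perverse sheaves $\Ndot_X \twoheadrightarrow i_*\mathbf{T}^\bullet$ on $X$. By \thmref{thm:qhomcriterion}, $\Ndot_X \cong \mathcal{N}[n-1]$ for a single constructible sheaf $\mathcal{N}$; on the other hand $i_*\mathbf{T}^\bullet$, being a perverse sheaf on $X$ whose support has dimension $\leq n-2$, has its ordinary cohomology sheaves concentrated in degrees $\geq -n+2$ by the perverse support condition. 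A morphism $\mathcal{N}[n-1] \to i_*\mathbf{T}^\bullet$ in $D_c^b(X)$ thus corresponds by shift to a morphism $\mathcal{N} \to i_*\mathbf{T}^\bullet[-n+1]$ from a sheaf in cohomological degree $0$ to a complex concentrated in degrees $\geq 1$, and this vanishes by the standard $t$-structure axiom $\Hom(D^{\leq 0}, D^{\geq 1}) = 0$. Hence the surjection $\Ndot_X \twoheadrightarrow i_*\mathbf{T}^\bullet$ is zero, forcing $\mathbf{T}^\bullet = 0$ and completing the argument.
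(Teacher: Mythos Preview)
Your proof is correct and follows essentially the same approach as the paper: both decompose the pure object $\Gr_{n-1}^W i^*\Ndot_X$ by strict support into $\Idot_{\Sigma X}(\hat{l}^*\Ndot_X)$ plus a piece $\mathbf{T}^\bullet$ (the paper's $\Mdot$) supported on $\Sigma X\setminus\U$, and then kill $\mathbf{T}^\bullet$ by observing that the composite surjection $i^*\Ndot_X \twoheadrightarrow \mathbf{T}^\bullet$ must vanish because of the cohomological concentration of $\Ndot_X$ in degree $-n+1$ (\thmref{thm:qhomcriterion}). The only cosmetic difference is that the paper phrases this last vanishing via the perverse $t$-structure (right-exactness of ${}^pH^0(m^*)$ together with \propref{prop:ndotsupp}), whereas you phrase it via the standard $t$-structure $\Hom$-vanishing; these are the same argument in different clothing.
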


\begin{proof}
Since $\Gr_{n-1}^W i^*\Ndot_{X}$ underlies a pure Hodge module, it is by definition semi-simple as a perverse sheaf, i.e., a direct sum of simple intersection cohomology sheaves with irreducible support. Hence, we can write $\Gr_{n-1}^W i^*\Ndot_{X}$ as direct sum of a semi-simple perverse sheaf $\Mdot$ with support in $\Sigma X \backslash \U$ and a semi-simple perverse sheaf whose summands are all not supported on $\Sigma X \backslash \U$. This second semi-simple perverse sheaf has to be $\Idot_{\Sigma X}(\hat{l}^*\Ndot_{X})$, by pulling back the short exact sequence (\ref{eqn:grses}) by $\hat{l}^*$. 

\medskip

Finally, we claim $\Mdot = 0$. Since $\Mdot$ is a direct summand of $\Gr_{n-1}^W i^*\Ndot_{X}$, we have a surjection of perverse sheaves
$$
i^*\Ndot_{X} \to \Gr_{n-1}^W i^*\Ndot_{X} \to \Mdot.
$$
But ${}^p H^0(m^*)$ is right exact for the perverse t-structure (since $m^*$ is a closed inclusion), so we also get a surjection
$$
0 = {}^p H^0(\hat{m}^*\Ndot_{X}) \to {}^p H^0(m^*\Mdot) = \Mdot \to 0,
$$
where the last equality follows from the fact that $\Mdot$ is supported on $\Sigma X \backslash \U$.

\end{proof}

\smallskip

\begin{cor}\label{cor:actualthm}
There are isomorphisms 
$$
\Gr_{n-1}^W \Q_{X}^\bullet[n] \cong \Gr_{n-1}^W \Ndot_{X} \cong i_*\Idot_{\Sigma X}(\hat{l}^*\Ndot_{X}),
$$
and 
$$
W_{n-2}\Q_{X}^\bullet[n] \cong W_{n-2}\Ndot_{X} \cong \hat{m}_*{}^p H^0(m^!i^*\Ndot_{X}).
$$
\end{cor}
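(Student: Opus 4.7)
The plan is to reduce the corollary to \thmref{thm:graded} by (i) transporting the statement from $\Q_X^\bullet[n]$ to $\Ndot_X$ using the short exact sequence (\ref{eqn:fundses}), and (ii) transporting it from $i^*\Ndot_X$ on $\Sigma X$ back up to $\Ndot_X$ on $X$ via the closed pushforward $i_*$.

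For the first reduction, recall from \secref{sec:basics} that (\ref{eqn:fundses}) together with the isomorphism $\Gr_n^W\Q_X^\bullet[n] \xrightarrow{\sim} \Idot_X$ identifies $\Ndot_X$ with $W_{n-1}\Q_X^\bullet[n]$ as mixed Hodge modules. Therefore, for every $k \leq n-1$ one has $W_k\Ndot_X = W_k\Q_X^\bullet[n]$, which immediately yields the first isomorphism in each line of the corollary: $\Gr_{n-1}^W\Q_X^\bullet[n] \cong \Gr_{n-1}^W\Ndot_X$ and $W_{n-2}\Q_X^\bullet[n] \cong W_{n-2}\Ndot_X$.

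For the second reduction, I would invoke the standing assumption of \remref{rem:warning} that $\supp \Ndot_X = D_X = \Sigma X$, which gives a canonical isomorphism $\Ndot_X \cong i_*i^*\Ndot_X$ of mixed Hodge modules. Since $i$ is a closed inclusion, $i_*$ is an exact functor on $Perv(X)$ (and on the abelian category of mixed Hodge modules on $X$) that preserves weights, hence it commutes with both $W_\bullet$ and $\Gr^W_\bullet$. Applying $i_*$ to the short exact sequence (\ref{eqn:gradedses}) from \thmref{thm:graded} and using $\hat m = i \circ m$, so that $i_*m_* = \hat{m}_*$, then produces
$$
0 \to \hat{m}_*{}^p H^0(m^!i^*\Ndot_X) \to \Ndot_X \to i_*\Idot_{\Sigma X}(\hat{l}^*\Ndot_X) \to 0,
$$
and by the isomorphism $\Gr_{n-1}^W i^*\Ndot_X \cong \Idot_{\Sigma X}(\hat{l}^*\Ndot_X)$ from the same theorem we obtain $\Gr_{n-1}^W\Ndot_X \cong i_*\Idot_{\Sigma X}(\hat{l}^*\Ndot_X)$ and $W_{n-2}\Ndot_X \cong \hat{m}_*{}^p H^0(m^!i^*\Ndot_X)$, completing the proof.

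There is no real obstacle here beyond bookkeeping; the only point that requires care is the exactness and weight-preservation of $i_*$ on mixed Hodge modules, which justifies moving $\Gr^W$ and $W_\bullet$ past $i_*$, and the use of \remref{rem:warning} to guarantee that $\Ndot_X$ is indeed the pushforward of its restriction to $\Sigma X$.
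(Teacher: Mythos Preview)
Your proposal is correct and follows essentially the same approach as the paper: the paper's proof consists of the single sentence that the corollary ``trivially follows from the fact that $i_*$ preserves weights, is exact for the perverse $t$-structure, and from the fact that $i_*i^*\Ndot_X \cong \Ndot_X$, since the support of $\Ndot_X$ is contained in $\Sigma X$.'' One minor remark: you do not actually need to invoke \remref{rem:warning} for the isomorphism $\Ndot_X \cong i_*i^*\Ndot_X$, since the containment $\supp \Ndot_X \subseteq \Sigma X$ holds automatically (the comparison complex is always supported on the singular locus), and containment is all that is required.
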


As mentioned in the introduction, this trivially follows from the fact that $i_*$ preserves weights (\cite{peterssteenbrink}, pg. 339), is exact for the perverse $t$-structure, and from the fact that $i_*i^*\Ndot_{X} \cong \Ndot_{X}$, since the support of $\Ndot_{X}$ is contained in $\Sigma X$. 

\begin{exm}
If $X$ is a simple normal crossing divisor in a smooth projective variety over $\C$, then the normalization of $X$ is smooth and results of \corref{cor:actualthm} hold. When $X$ is purely 2-dimensional, we can determine the entire weight filtration on $X$ by the results of \secref{sec:surfaces} below.
\end{exm}

\bigskip

At first glance, the formula for $W_{n-2}i^*\Ndot_X$ appears quite abstruse. We now give a much more geometric interpretation of this perverse sheaf. 

\begin{thm}\label{thm:geomweight}
Let $g$ be a complex analytic function on $\Sigma X$ such that $V(g)$ contains $\Sigma X \backslash \U$, but does not contain any irreducible component of $\Sigma X$. Then, 
$$
W_{n-2}i^*\Ndot_X \cong {m'}_*\ker \{ \phi_g[-1]i^*\Ndot_X \overset{\var}{\longrightarrow} \psi_g[-1]i^*\Ndot_X \},
$$
where the kernel is taken in the category of perverse sheaves on $\Sigma X$, $\var$ is the variation morphism, and $m' : V(g) \hookrightarrow \Sigma X$ is the closed inclusion.
\end{thm}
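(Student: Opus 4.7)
The plan is to identify both $m'_* \ker\{\phi_g[-1]i^*\Ndot_X \overset{\var}{\to} \psi_g[-1]i^*\Ndot_X\}$ and $W_{n-2} i^*\Ndot_X$ with the single perverse sheaf $m_*\, {}^p H^0(m^! i^*\Ndot_X)$ that already appears on the right-hand side of \thmref{thm:graded}. First I would invoke the standard distinguished triangle from the vanishing/nearby cycle formalism: for any perverse sheaf $\Pdot$ on $\Sigma X$ and any holomorphic $g$ on $\Sigma X$ with closed inclusion $m' : V(g) \hookrightarrow \Sigma X$,
$$
m'^! \Pdot \to \phi_g[-1]\Pdot \overset{\var}{\longrightarrow} \psi_g[-1]\Pdot \overset{+1}{\to}.
$$
Since $\phi_g[-1]$ and $\psi_g[-1]$ preserve the perverse $t$-structure and $m'^!\Pdot \in {}^p D^{\geq 0}(V(g))$, the long exact sequence in perverse cohomology collapses to the four-term exact sequence
$$
0 \to {}^p H^0(m'^!\Pdot) \to \phi_g[-1]\Pdot \overset{\var}{\to} \psi_g[-1]\Pdot \to {}^p H^1(m'^!\Pdot) \to 0
$$
in $Perv(V(g))$. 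In particular $\ker(\var) \cong {}^p H^0(m'^! \Pdot)$, so applying this with $\Pdot = i^*\Ndot_X$ reduces the theorem to the isomorphism $m'_*\, {}^p H^0(m'^! i^*\Ndot_X) \cong m_*\, {}^p H^0(m^! i^*\Ndot_X)$.

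Next I would show that the perverse sheaf $P := {}^p H^0(m'^! i^*\Ndot_X)$ is supported inside $\Sigma X \backslash \U$. The hypothesis on $g$ forces $V(g) \cap \U$ to be a Cartier divisor in the smooth complex manifold $\U$, so by smooth base-change along the open immersion $\U \hookrightarrow \Sigma X$ together with \propref{prop:ndotvhs},
$$
m'^! i^*\Ndot_X \big|_{V(g)\cap \U} \;\cong\; \iota^!\bigl(L[n-1]\bigr),
$$
where $\iota : V(g) \cap \U \hookrightarrow \U$ and $L[n-1] = \hat{l}^*\Ndot_X$ is the shifted local system from \propref{prop:ndotvhs}. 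Since $(V(g) \cap \U, \U)$ is a codimension-one LCI pair, the standard formula gives $\iota^!(L[n-1]) \cong L|_{V(g) \cap \U}[n-3]$ up to an invertible twist; this lives in perverse degree $+1$ on the $(n-2)$-dimensional space $V(g)\cap \U$. Hence ${}^p H^0$ vanishes after restriction to $V(g) \cap \U$, and since ${}^p H^0$ commutes with open restriction, $\supp P \subseteq \Sigma X \backslash \U$.

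Finally, factor $m = m' \circ m''$ with $m'' : \Sigma X \backslash \U \hookrightarrow V(g)$. Because $P$ is a perverse sheaf on $V(g)$ supported on the closed subset $\Sigma X \backslash \U$, we have $P = m''_* Q$ for a unique perverse sheaf $Q$ on $\Sigma X \backslash \U$, with $Q = m''^! P$. To compare $Q$ with ${}^p H^0(m^! i^*\Ndot_X)$, apply $m''^!$ to the perverse truncation triangle
$$
P \to m'^! i^*\Ndot_X \to {}^p\tau_{\geq 1}\bigl(m'^! i^*\Ndot_X\bigr) \overset{+1}{\to};
$$
since $m''^!$ is left $t$-exact and ${}^p\tau_{\geq 1}\bigl(m'^! i^*\Ndot_X\bigr) \in {}^p D^{\geq 1}(V(g))$, the long exact sequence collapses to $Q \cong {}^p H^0(m''^! m'^! i^*\Ndot_X) = {}^p H^0(m^! i^*\Ndot_X)$. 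Pushing forward along $m'$ then gives $m'_* \ker(\var) = m'_* m''_* Q = m_*\, {}^p H^0(m^! i^*\Ndot_X) = W_{n-2} i^*\Ndot_X$, as claimed.

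The main technical point to watch is the assertion in the middle paragraph that $\iota^! L[n-1]$ really sits in perverse degree $+1$ when $V(g) \cap \U$ is singular. This rests on the fact that $V(g)$ is cut out in $\Sigma X$ by a single equation not vanishing on any irreducible component, so $V(g) \cap \U$ is a purely codimension-one LCI in the smooth manifold $\U$; the relative dualizing complex is then an invertible shifted sheaf and the perverse-degree count matches the smooth case.
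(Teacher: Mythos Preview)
Your overall strategy is sound, and the first and third paragraphs are correct. The variation triangle gives $\ker(\var) \cong {}^p H^0(m'^! i^*\Ndot_X)$ exactly as you say, and the final comparison $m'_* {}^p H^0(m'^! i^*\Ndot_X) \cong m_* {}^p H^0(m^! i^*\Ndot_X)$ via the factorization $m = m' \circ m''$ is fine once the support claim is established.

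The justification in the middle paragraph, however, has a real gap. The formula $\iota^!(L[n-1]) \cong L|_{V(g)\cap \U}[n-3]$ ``up to an invertible twist'' fails when $V(g)\cap \U$ is singular. Cohomological purity for a closed embedding $\iota: Z \hookrightarrow \U$ into a smooth $\U$ requires $Z$ itself to be a $\Q$-homology manifold, not merely LCI; the ``relative dualizing complex'' you invoke is the coherent one, and it does not control the constructible $\iota^!\Q_\U$. Concretely, take $\U=\C^2$ (so $n=3$), $g=xy$, $L=\Q$: then $(\iota^!\Q_{\C^2})_\0$ has stalk $\Q^2$ in degree $2$ and $\Q$ in degree $3$, so $\iota^!\Q_{\C^2}\not\cong\Q_Z[-2]$. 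The conclusion you need, namely $\iota^!(L[n-1])\in {}^p D^{\geq 1}$, is nevertheless correct and follows by a one-line duality argument: since $\U$ is smooth of dimension $n-1$, $\mathcal{D}(L[n-1])\cong L^\vee[n-1]$, hence $\mathcal{D}\bigl(\iota^!(L[n-1])\bigr)\cong\iota^*(L^\vee[n-1])$ is concentrated in cohomological degree $-(n-1)$ with support of dimension $\leq n-2$; the support bound places it in ${}^p D^{\leq -1}$, and duality gives ${}^p H^0\bigl(\iota^!(L[n-1])\bigr)=0$.

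For comparison, the paper bypasses this support analysis altogether. It simply observes that $\Sigma X\setminus V(g)\subseteq \U$ is again a smooth open dense subset over which the normalization is a covering, so \propref{prop:ndotsupp}, \propref{prop:ndotvhs}, and \thmref{thm:graded} apply verbatim with the open/closed pair $(\Sigma X\setminus V(g),\,V(g))$ in place of $(\U,\,\Sigma X\setminus\U)$. That yields $W_{n-2}\,i^*\Ndot_X \cong m'_*\,{}^p H^0(m'^! i^*\Ndot_X)$ directly, after which only the variation-triangle identification remains. Your route is more explicit about why the two choices of open set agree, but at the cost of the extra support argument that needs the corrected justification above.
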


\begin{proof}
We first note that such a function $g$ exists locally by the prime avoidance lemma. Then, $\Sigma X \backslash V(g) \subseteq \U$, and we have as perverse sheaves
$$
\Idot_{\Sigma X}({i^*\Ndot_X}_{|_{\Sigma X \backslash V(g)}}) \cong \Idot_{\Sigma X}(\hat{l}^*\Ndot_X),
$$
since the normalization is still a covering projection away from $V(g)$ in $\Sigma X$. One notes then that the proofs of \propref{prop:ndotsupp}, \propref{prop:ndotvhs}, and \thmref{thm:graded} remain unchanged with these new choices of complementary subspaces $V(g) \overset{m'}{\hookrightarrow} \Sigma X$ and $\Sigma X \backslash V(g) \overset{l'}{\hookrightarrow} \Sigma X$, so that 
$$
\Gr_{n-1}^W i^*\Ndot_X \cong \Idot_{\Sigma X}({i^*\Ndot_X}_{|_{\Sigma X \backslash V(g)}})
$$
and 
$$
W_{n-2} i^*\Ndot_X \cong m'_*{}^p H^0({m'}^!i^*\Ndot_X).
$$

% and that the inclusion $V(g) \hookrightarrow \Sigma X$ coincides with the inclusion $m: Z \hookrightarrow \Sigma X$. 

The claim then follows by taking the long exact sequence in perverse cohomology of the variation distinguished triangle 
$$
\phi_g[-1]i^*\Ndot_X \overset{\var}{\longrightarrow} \psi_g[-1]i^*\Ndot_X \to {m'}^![1]i^*\Ndot_X \overset{+1}{\longrightarrow},
$$
yielding
$$
0 \to {}^p H^0({m'}^!i^*\Ndot_X) \to \phi_g[-1]i^*\Ndot_X \overset{\var}{\longrightarrow} \psi_g[-1]i^*\Ndot_X \to {}^p H^1({m'}^!i^*\Ndot_X) \to 0.
$$
%since $W_{n-2}i^*\Ndot_X \cong m_*{}^p H^0(m^!i^*\Ndot_X)$.
\end{proof}

\bigskip

\section{Connection with the Vanishing Cycles}\label{sec:vancycle}

In \cite{comparison}, Massey shows that, for an arbitrary (reduced) hypersurface $V(f)$ in some open neighborhood $\W$ of the origin in $\C^{n+1}$, one has a isomorphism of perverse sheaves $\Ndot_{V(f)} \cong \ker \{\Id-\wt T_f\}$, where $\wt T_f$ is the Milnor monodromy action on the vanishing cycles $\phi_f[-1]\Z_\W^\bullet[n+1]$ (this isomorphism holds for $\Q$ coefficients, where one may also obtain this result using the language of mixed Hodge modules). 

However, $\Id -\wt T_f$ is not a morphism of mixed Hodge modules; to remedy this, one instead considers the morphism $N = \frac{1}{2\pi i} \log T_u$, where $T_u$ is the unipotent part of the monodromy operator $\wt T_f$. In this case, $\ker \{\Id - \wt T_f\} \cong \ker N$ as perverse sheaves, and we consider $\ker N$ as a subobject of the unipotent vanishing cycles $\phi_{f,1}[-1]\Q_\W^\bullet[n+1]$. 

On the level of mixed Hodge modules, we have an isomorphism
$$
\Ndot_{V(f)} \cong \ker N (1)
$$
where $(1)$ denotes the Tate twist operation. This description follows from Massey's original proof for perverse sheaves \cite{comparison}, with the following changes. Starting from the two short exact sequences of mixed Hodge modules
\begin{equation}\label{eqn:milnorses}
0 \to j^*[-1]\Q_\W^\bullet[n+1] \to \psi_{f,1}[-1]\Q_\W^\bullet[n+1] \xrightarrow{can} \phi_{f,1}[-1]\Q_\W^\bullet[n+1] \to 0
\end{equation}
and 
$$
0 \to \phi_{f,1}[-1]\Q_\W^\bullet[n+1] \xrightarrow{var} \psi_{f,1}[-1]\Q_\W^\bullet[n+1](-1) \to j^![1]\Q_\W^\bullet[n+1] \to 0,
$$
(note the variation morphism now has a Tate twist of $(-1)$), so that $N = can \circ var$. Then, if $i : \Sigma f \hookrightarrow V(f)$, we obtain the isomorphism 
$$
\phi_{f,1}[-1]\Q_\W^\bullet[n+1](1) \xrightarrow{i_*{}^p H^0(i^! var)} i_*{}^p H^0(i^!\psi_{f,1}[-1]\Q_\W^\bullet[n+1])
$$
 since $(j \circ i)^![1]\Q_\W^\bullet[n+1] \in {}^p D^{\geq 0}(V(f) \backslash \Sigma f)$. This, together with the isomorphisms 
 $$
 \Ndot_{V(f)} \cong i_*{}^p H^0(i^!j^*[-1]\Q_\W^\bullet[n+1]) \cong i_*{}^p H^0(i^!\ker can)
 $$
 obtained by \lemref{lem:getses} and applying $i_*{}^p H^0(i^!)$ to (\ref{eqn:milnorses}) yields the final identification
$$
 \Ndot_{V(f)} \cong \ker N (1).
 $$
 Hence, $W_k \Ndot_{V(f)} \cong W_{k+2}\ker N$ for all $k \leq n-1$. 
 
 The unipotent vanishing cycles $\phi_{f,1}[-1]\Q_\W^\bullet[n+1]$, as a mixed Hodge module, is endowed with the monodromy weight filtration shifted by $n+1$, via the nilpotent operator $N$ (see e.g., \cite{peterssteenbrink} or \cite{modulesdehodge}). Hence, for all $k$,
 $$
 N(W_k \phi_{f,1}[-1]\Q_\W^\bullet[n+1]) \subseteq W_{k-2} \phi_{f,1}[-1]\Q_\W^\bullet[n+1],
 $$
 and there are isomorphisms
 \begin{equation}\label{eqn:hardlefschetz}
 \Gr_{n+1+k}^W \phi_{f,1}[-1]\Q_\W^\bullet[n+1] \xrightarrow{\thicksim} \Gr_{n+1-k}^W \phi_{f,1}[-1]\Q_\W^\bullet[n+1]
 \end{equation}
 for all $k \geq 0$. These isomorphisms are a vast generalization of the Hard Lefschetz Theorem for the cohomology of compact K\"{a}hler varieties.
 
 We will examine this again in \subsecref{subsec:surfacesvancycle}, in the context of parameterized surfaces $V(f)$ in $\C^3$.

 \bigskip

\section{The Surface Case}\label{sec:surfaces}
Suppose $X$ is a parameterized surface; we want to compute $W_0\Q_X^\bullet[2]$ using the isomorphism
$$
W_0 \Q_X^\bullet[2] = W_0\Ndot_X \cong \hat{m}_*{}^p H^0(m^!i^*\Ndot_X).
$$

\subsection{Computing $W_0$}\label{subsec:weightzero}
The main tool we use is the following: if $\dim_\0 \Sigma X = 1$, then $\Sigma X \backslash \U$ is zero dimensional (or empty), and perverse cohomology on a zero-dimensional space is just ordinary cohomology. Recall that $\wt X \overset{\pi}{\to} X$ is the normalization map. 

\begin{thm}\label{thm:weightzeropart}
 Suppose $X$ is a parameterized surface. Then, 
$$
W_0\Q_X^\bullet[2] \cong V_{\{\0\}}^\bullet
$$
is a perverse sheaf concentrated on a single point, i.e., a finite-dimensional $\Q$-vector space, of dimension 
\begin{align*}
\dim V = 1-|\pi^{-1}(\0)| + \sum_C \dim \ker\{\Id -h_C\},
\end{align*}
where $\{C\}$ is the collection of irreducible components of $\Sigma X$ at $\0$, and for each component $C$,  $h_C$ is the (internal) monodromy operator on the local system $H^{-1}(\Ndot_X)_{|_{C\backslash \{\0\}}}$.  Note that $|\pi^{-1}(\0)|$ is, of course, equal to the number of irreducible components of $X$ at $\0$.

\end{thm}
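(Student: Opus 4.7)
The plan is to combine \corref{cor:actualthm} with a concrete computation on a point. Corollary~\ref{cor:actualthm} gives
$$W_0\Q_X^\bullet[2] \cong \hat{m}_*\, {}^p H^0(m^! i^*\Ndot_X).$$
In the surface case with $\dim_\0 \Sigma X = 1$, \exref{exm:surfacecase} lets us take $\U = \Sigma X \backslash \{\0\}$ (locally at $\0$), so $\Sigma X \backslash \U = \{\0\}$ is a single point. Since the perverse $t$-structure on a point coincides with the standard one, ${}^p H^0(m^! i^*\Ndot_X) = H^0(m^! i^*\Ndot_X)_\0$ is a finite-dimensional $\Q$-vector space $V$, and $\hat{m}_* V$ is the skyscraper $V_{\{\0\}}^\bullet$. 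The remaining task is to compute $\dim V$.

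To this end, I would apply the attaching triangle on $\Sigma X$ for the pair $(\U, \{\0\})$,
$$m_* m^! i^*\Ndot_X \to i^*\Ndot_X \to Rl_* \hat{l}^*\Ndot_X \xrightarrow{+1},$$
and take stalk cohomology at $\0$. By \thmref{thm:qhomcriterion} the middle stalk is concentrated in degree $-1$ with dimension $|\pi^{-1}(\0)| - 1$. For the right-hand term, $\hat{l}^*\Ndot_X \simeq \call[1]$ for the local system $\call := H^{-1}(\Ndot_X)|_\U$; since the link of $\0$ in $\Sigma X$ is a disjoint union of circles, one per irreducible branch $C$ at $\0$ (each carrying monodromy $h_C$), standard local-system cohomology on $S^1$ gives
$$H^{-1}(Rl_*\hat{l}^*\Ndot_X)_\0 \cong \bigoplus_C \ker\{\Id - h_C\}, \qquad H^0(Rl_*\hat{l}^*\Ndot_X)_\0 \cong \bigoplus_C \coker\{\Id - h_C\}.$$

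The main obstacle, and the decisive input, is showing $H^{-1}(m^! i^*\Ndot_X)_\0 = 0$, which is what allows the long exact sequence to collapse to a short exact sequence determining $\dim V$. I would argue as follows: because $\Ndot_X$ is supported on $\Sigma X$ (our standing assumption $D_X = \Sigma X$), we have $i^*\Ndot_X \cong i^!\Ndot_X$, hence $m^! i^*\Ndot_X \cong \hat{m}^!\Ndot_X$; perversity of $\Ndot_X$, via the cosupport condition applied at the point $\{\0\}$, then forces $H^k(\hat{m}^!\Ndot_X) = 0$ for $k < 0$. The long exact sequence accordingly reduces to
$$0 \to H^{-1}(\Ndot_X)_\0 \to \bigoplus_C \ker\{\Id - h_C\} \to V \to 0,$$
and counting dimensions yields $\dim V = 1 - |\pi^{-1}(\0)| + \sum_C \dim \ker\{\Id - h_C\}$, as claimed. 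Everything else is routine constructible-sheaf bookkeeping on a curve; the perversity vanishing is the one geometrically non-trivial ingredient, since it is precisely what forces the natural attaching map $H^{-1}(\Ndot_X)_\0 \to \bigoplus_C \ker\{\Id - h_C\}$ to be injective.
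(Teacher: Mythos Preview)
Your argument is correct and follows essentially the same route as the paper: both reduce to the attaching triangle (equivalently, the relative hypercohomology long exact sequence) on $\Sigma X$ for the pair $(\U,\{\0\})$, use \thmref{thm:qhomcriterion} to control the stalk $H^*(\Ndot_X)_\0$, and invoke the cosupport condition to kill $H^{-1}(m^! i^*\Ndot_X)_\0$ and collapse the long exact sequence to the short one (\ref{eqn:semisimpleses}). Your justification of the cosupport step via the identification $m^! i^*\Ndot_X \cong \hat m^!\Ndot_X$ (using $i^*\cong i^!$ on $\supp\Ndot_X$) is in fact slightly more explicit than the paper's one-line appeal to ``the cosupport condition on $i^*\Ndot_X$''.
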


\begin{proof}
%The statement for isolated singularities follows from the fact that $\Ndot_{V(f)}$ is supported on a point, and $\U = \Sigma f$. Hence, $\Ndot_{V(f)} \overset{\thicksim}{\to} i_*\Idot_{\Sigma f}(\hat{l}^*\Ndot_{V(f)})$. 

First, note that we have $\Sigma X \backslash \U = \{\0\}$, and $\U = \bigcup_C (C \backslash \{\0\})$, where each $C \backslash \{\0\}$ is homeomorphic to a punctured complex disk. Then, we find
\begin{align*}
{}^p H^0(m^!i^*\Ndot_{X}) &\cong H^0(m^!i^*\Ndot_{X}) \cong \HH^0(\Sigma X,\Sigma X \backslash \{\0\};i^*\Ndot_{X}) 
\end{align*}
We can compute this last term from the long exact sequence in relative hypercohomology with coefficients in $\Ndot_{X}$:
\begin{align*}
0 \to \HH^{-1}(\Sigma X,\Sigma X \backslash \{\0\};i^*\Ndot_{X}) &\to H^{-1}(\Ndot_{X})_\0 \to \HH^{-1}(\Sigma X \backslash \{\0\};i^*\Ndot_{X}) \to \\
 \HH^0(\Sigma X,\Sigma X \backslash \{\0\};i^*\Ndot_{X}) &\to H^0(\Ndot_{X})_\0 \to \HH^0(\Sigma X \backslash \{\0\};i^*\Ndot_{X}) \to 0 \\
\end{align*}
The cosupport condition on $i^*\Ndot_{X}$ implies $\HH^{-1}(\Sigma X, \Sigma X \backslash \{\0\};i^*\Ndot_{X}) = 0$. Additionally, since $H^0(\Ndot_{X})$ is only supported on $\{\0\}$, it follows that $\HH^0(\Sigma X \backslash\{\0\};i^*\Ndot_{X}) = 0$ as well. Since the normalization of $X$ is rational homology manifold, $H^0(\Ndot_{X})_\0 = 0$ by \thmref{thm:qhomcriterion}, and $\dim H^{-1}(\Ndot_{X})_\0 = |\pi^{-1}(\0)|-1$. 

\smallskip

Finally, 
\begin{align*}
\HH^{-1}(\Sigma X \backslash \{\0\};i^*\Ndot_{X}) &\cong \HH^{-1}(\bigcup_C C\backslash \{\0\};i^*\Ndot_{X}) \\
&\cong \bigoplus_C \HH^{-1}(C \backslash \{\0\};i^*\Ndot_{X}).
\end{align*}
This last term is easily seen to be (the sum of) global sections of the local system $H^{-1}(\Ndot_{X})_{|_{C \backslash \{\0\}}}$, which is just $\ker \{\Id-h_C\}$. Taking the alternating sums of the dimensions of the terms in the resulting short exact sequence 
\begin{equation}\label{eqn:semisimpleses}
0 \to H^{-1}(\Ndot_{X})_\0 \to\bigoplus_C \ker \{\Id -h_C\} \to \HH^0(\Sigma X,\Sigma X \backslash \{\0\};i^*\Ndot_{X}) \to 0
\end{equation}
yields the desired result.
\end{proof}

\begin{thm}\label{thm:semispacevspace}
There are isomorphisms
$$
\Gr_0^W \Q_X^\bullet[2] \cong \Gr_0^W \Ndot_X \cong W_0 \Ndot_X,
$$
and therefore the weight filtration on $\Q_X^\bullet[2]$ is concentrated in degrees $[0,2]$.
\end{thm}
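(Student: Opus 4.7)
The plan is to reduce all three claims to the single assertion $W_{-1}\Ndot_X = 0$. The first isomorphism $\Gr_0^W \Q_X^\bullet[2] \cong \Gr_0^W \Ndot_X$ is immediate from $W_k \Q_X^\bullet[2] = W_k \Ndot_X$ for $k \leq n-1 = 1$ established in \secref{sec:basics}; granting $W_{-1}\Ndot_X = 0$, the second isomorphism is tautological, and the weight filtration on $\Q_X^\bullet[2]$ being concentrated in $[0,2]$ follows since the upper bound $W_2 \Q_X^\bullet[2] = \Q_X^\bullet[2]$ is already known. By \thmref{thm:weightzeropart}, $W_0\Ndot_X \cong V^\bullet_{\{\0\}}$ is a skyscraper at $\{\0\}$, so as a MHM it corresponds to a single MHS on the finite-dimensional $\Q$-vector space $V$; thus it suffices to show this MHS is pure of weight $0$.

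To do this, I would upgrade the vector-space sequence (\ref{eqn:semisimpleses}) from the proof of \thmref{thm:weightzeropart} to a short exact sequence of MHS by applying the MHM-level hypercohomology functor to the distinguished triangle $m_* m^! i^* \Ndot_X \to i^* \Ndot_X \to l_* \hat{l}^* \Ndot_X \overset{+1}{\to}$ in the derived category of mixed Hodge modules on $\Sigma X$; the vanishings $\HH^{-1}(\Sigma X, \U; i^*\Ndot_X) = 0$ (cosupport) and $H^0(\Ndot_X)_\0 = 0$ (\thmref{thm:qhomcriterion}) reproduce (\ref{eqn:semisimpleses}) now interpreted in the category of MHS. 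Then I would verify both outer terms are pure of weight $0$: for $H^{-1}(\Ndot_X)_\0$, the same procedure applied to (\ref{eqn:fundses}) recovers (\ref{eqn:vectses}) as a SES of MHS, whose outer terms $\Q_{\{\0\}}$ and $H^{-2}(\Idot_X)_\0 \cong \bigoplus_{y \in \pi^{-1}(\0)} \Q$ are each pure of weight $0$; for $\bigoplus_C \ker\{\Id - h_C\}$, \propref{prop:ndotvhs} gives that each $L_C := \hat{l}^*({\Ndot_X}_{|_C})$ underlies a polarizable VHS of weight $0$ on $C \setminus \{\0\}$, and its monodromy invariants $L_C^{h_C} \cong H^0(C \setminus \{\0\}; L_C)$ form a constant sub-VHS of weight $0$, hence a pure HS of weight $0$. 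Strict compatibility of the weight filtration with the SES of MHS then forces $V$ to be pure of weight $0$, completing the argument.

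The main obstacle is justifying that (\ref{eqn:semisimpleses}) genuinely lifts to a SES of MHS --- this rests on Saito's six-functor formalism for MHM in the local complex analytic setting, ensuring that the LES in relative hypercohomology associated to the above distinguished triangle naturally inherits an MHS structure, parallel to the argument already carried out at the level of (\ref{eqn:vectses}) in the proof of \propref{prop:ndotvhs}. A secondary subtlety is confirming that $H^0$ of a polarizable VHS on a non-proper smooth base is pure of the VHS weight; this follows from the fact that monodromy invariants form a constant sub-VHS, a standard consequence of Deligne's theorem of the fixed part for polarizable variations.
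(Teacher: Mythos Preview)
Your proposal is correct and follows the same overall strategy as the paper: reduce to showing the mixed Hodge structure on $V \cong H^0(W_0\Ndot_X)_\0$ is pure of weight $0$, and do this by lifting (\ref{eqn:semisimpleses}) to a short exact sequence of MHS and verifying purity of the two left-hand terms. The only substantive difference is in the justification that $\ker\{\Id - h_C\}$ is pure of weight $0$. The paper invokes a specific structural fact about $\Ndot_X$---namely that the internal monodromy $h_C$ is \emph{semi-simple} (Remark~2.5 of \cite{qhomcriterion})---so that $\ker\{\Id - h_C\}$ is a direct summand of the generic stalk $H^{-1}(\Ndot_X)_p$, which is pure of weight $0$ by \propref{prop:ndotvhs}. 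You instead appeal to the theorem of the fixed part for polarizable VHS to conclude that the monodromy invariants form a constant sub-VHS of the same weight. Both arguments are valid; yours is more general (it does not require knowing that $h_C$ is semi-simple) but imports a slightly heavier piece of Hodge theory, while the paper's is more elementary once one grants the semi-simplicity input from the earlier work.
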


\begin{proof}
Consider the short exact sequence (\ref{eqn:semisimpleses}) at the end of the proof of \thmref{thm:weightzeropart}; in particular, the middle term $\bigoplus_C \ker \{\Id-h_C\}$. 

The internal monodromy action of $H^{-1}(l^*\Ndot_{X})$ is semi-simple (See Remark 2.5 of \cite{qhomcriterion}), and each of the subspaces $\ker \{\Id-h_C\}$ is invariant under this action (and are therefore semi-simple as well). Consequently, 
$$
\HH^0(\Sigma X,\Sigma X \backslash \{\0\};i^*\Ndot_X) \cong H^0(W_0 \Ndot_{X})_\0
$$
is semi-simple as a $\Q$-vector space. 

\medskip

We again examine (\ref{eqn:semisimpleses}):
$$
0 \to H^{-1}(\Ndot_{X})_\0 \to\bigoplus_C \ker \{\Id -h_C\} \to H^0(W_0 \Ndot_{X})_\0 \to 0.
$$
The same argument used in \propref{prop:ndotvhs} shows that $H^{-1}(\Ndot_{X})_p$ carries a pure Hodge structure of weight zero. We claim that $\bigoplus_C \ker \{ \Id-h_C\}$ is also pure of weight zero. Indeed, for all components $C$, $\ker \{\Id-h_C\}$ is a direct summand of the stalk $H^{-1}(\Ndot_{X})_p$ for $p \in C \backslash \{\0\}$ generic, by semi-simplicity of the monodromy operator $h_C$, and is therefore also a weight zero Hodge structure. 

Since (\ref{eqn:semisimpleses}) is a short exact sequence of graded, polarizable mixed Hodge structures, it follows that $H^0(W_0 \Ndot_{X})_\0$ is also pure of weight zero, and we are done.
%By \thmref{thm:weightzeropart}, $\Gr_0^W \Ndot_{V(f)}$ is an (\emph{a fortiori}) semi-simple perverse sheaf on a point, and the short exact sequence of $\Q$-vector spaces
%$$
%0\to H^0( W_{-1}\Ndot_{V(f)})_\0 \to H^0(W_0 \Ndot_{V(f)})_\0 \to H^0(\Gr_0^W \Ndot_{V(f)})_\0 \to 0
%$$
%implies that $H^0(W_{-1} \Ndot_{V(f)})_\0$ must be also be semi-simple. Most importantly, however, is that $H^0(W_{-1} \Ndot_{V(f)})_\0$ and $H^0(\Gr_0^W \Ndot_{V(f)})_\0$ must be complementary subspaces inside $H^0(W_0 \Ndot_{V(f)})_\0$.

\end{proof}

\bigskip

\subsection{Topological Invariance of the Weight Filtration}\label{subsec:topinv}

We now show more generally that the weight filtration is an \textbf{invariant of the local topological-type} of parameterized surfaces $X$ in a neighborhood of a point $\0 \in X$, regardless of the space in which they are embedded. Before we state our theorem, however, we wish to recall some important facts.

First, we note that the perversity of the complex $\Q_X^\bullet[2]$ is topological: $\Q_X^\bullet[2]$ is perverse if and only if, for all $p \in X$, the real link $K_{X,p}$ is connected. This is easy to see by the cosupport condition, that is, the condition that, for all $k \in \Z$, the dimension of the space
$$
\cosupp^k \Q_X^\bullet[2] := \overline{ \{ p \in X \, | \, H_{\{p\}}^k(\Q_X^\bullet[2]) \neq 0 \}}
$$
is at most $k$-dimensional. The claim then follows from the fact that $H_{\{p\}}^k(\Q_X^\bullet[2]) \cong \wt H^{k+1}(K_{X,p};\Q)$ and applying the cosupport condition for $k=-1$. The support condition is trivially satisfied when $X$ is purely 2-dimensional.

Second, it is well known that $\Idot_X$ is a topological invariant of $X$, via a classical result of Goresky and MacPherson (see Section 4.3 of \cite{inthom2}). Consequently, if $\alpha : X \to Y$ is a homeomorphism of purely 2-dimensional reduced complex analytic spaces, and $\Q_X^\bullet[2]$ is perverse, then we know $\alpha^* \Idot_Y \cong \Idot_X$ and $\Q_X^\bullet[2] \cong \alpha^* \Q_Y^\bullet[2]$ is perverse; moreover, we obtain a morphism of short exact sequence of perverse sheaves on $X$:
\begin{equation*}
\xymatrix{
0 \ar[r]& \Ndot_X \ar[r]\ar@{-->}[d]&\Q_X^\bullet[2] \ar[d]^{\cong}\ar[r]&\Idot_X \ar[d]^{\cong}\ar[r]&0\\
0 \ar[r]& \alpha^*\Ndot_Y \ar[r] &\alpha^*\Q_Y^\bullet[2] \ar[r]&\alpha^*\Idot_Y \ar[r]&0
}
\end{equation*}
and it follows that we have an isomorphism $\Ndot_X \cong \alpha^*\Ndot_Y$. This shows that $W_i \Q_X^\bullet[2] \cong \alpha^*W_i \Q_Y^\bullet[2]$ for $i=1,2$. (Local) topological invariance of the weight zero term is more delicate, and will require further a homeomorphism of pairs $(X,\0_X) \xrightarrow{\alpha} (Y,\0_Y)$.

Recall \exref{exm:surfacecase}, where we show that, for parameterized surfaces $V(f) \subseteq \C^3$, the subset $\U \subseteq \Sigma f$ over which the normalization is a covering map is always equal to $\Sigma f \backslash \{\0\}$. This is, however, not the best viewpoint from which to analyze purely topological properties of $V(f)$. For example, the surface $V(y^2-x^3)$ in $\C^3$ is parameterized (it is a cross-product of a cusp and a line), and homeomorphic to $\C^2$, which is smooth. The problem is that the normalization of $V(y^2-x^3)$ is a \emph{bijection}, and thus $\Ndot_{V(y^2-x^3)}=0$. This is a non-issue for this particular example, since then $\Q_{V(y^2-x^3)}^\bullet[2] \cong \Idot_{V(y^2-x^3)}$, which is a topological invariant. We usually assume that $\supp \Ndot_X = \Sigma X$ to avoid these pathological cases, but \textbf{for completeness of the discussion below, we drop this assumption.}

To remedy this, we instead focus on $\U$ as a subset of $D_X:= \supp \Ndot_X$; under any homeomorphism $\alpha : (X,\0_X) \to (Y,\0_Y)$ of parameterized surfaces, $(D_X,\0_X)$ must be mapped homeomorphically onto $(D_Y,\0_Y)$. This is because $D_X$ is always purely $1$-dimensional at $\0$ (it is the support of a perverse sheaf with stalk cohomology concentrated in degree $-1$), and $\alpha(\0_X) = \0_Y$, so that the irreducible components of $D_X$ at $\0_X$ map bijectively onto the irreducible components of $D_Y$ at $\0_Y$. Thus, the ``correct" invariant definition of $\U$ is the disjoint union of all $C \backslash \{\0_X\}$ where $C$ is an irreducible component of $D_X$ at $\0_X$. 

More precisely, we show the following. 

\begin{thm}\label{thm:topinv}
If $(X,\0_X)$ and $(Y,\0_Y)$ are two parameterized surfaces and $\alpha$ is any homeomorphism of pairs $(X,\0_X) \to (Y,\0_Y)$, there are isomorphisms of perverse sheaves on $X$
$$
W_i \Q_X^\bullet[2] \cong \alpha^* W_i \Q_Y^\bullet[2],
$$
for $i=0,1,2$. 
\end{thm}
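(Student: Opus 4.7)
The cases $i=1,2$ are essentially already taken care of in the discussion preceding the statement: functoriality of pullback gives $\Q_X^\bullet[2]\cong\alpha^*\Q_Y^\bullet[2]$, while Goresky--MacPherson's topological invariance of intersection cohomology yields $\Idot_X\cong\alpha^*\Idot_Y$, and the diagram chase displayed just above the theorem produces the compatible isomorphism $\Ndot_X\cong\alpha^*\Ndot_Y$, which handles $W_1$. All of the content of the theorem thus lies in the $i=0$ case, which is what my plan will address.

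The first step is to reduce the problem to a numerical check. By \thmref{thm:weightzeropart} and \thmref{thm:semispacevspace}, $W_0\Q_X^\bullet[2]$ is a skyscraper perverse sheaf at $\0_X$; since $\alpha(\0_X)=\0_Y$, the same is true of $\alpha^*W_0\Q_Y^\bullet[2]$. Skyscraper perverse sheaves at a fixed point are classified up to isomorphism by a single nonnegative integer, so it will suffice to show that the stalk dimensions at $\0_X$ and $\0_Y$ agree. Because $\Ndot_X$ is supported on $D_X\subseteq\Sigma X$, only components of $\Sigma X$ lying in $D_X$ contribute nontrivially to the sum in \thmref{thm:weightzeropart}, so the formula there reads
$$
\dim H^0(W_0\Q_X^\bullet[2])_{\0_X}
\;=\;
1-|\pi^{-1}(\0_X)|+\sum_C\dim\ker\{\Id-h_C\},
$$
where $C$ now ranges over the irreducible components of $D_X$ at $\0_X$ and $h_C$ is the monodromy of the local system $H^{-1}(\Ndot_X)|_{C\setminus\{\0_X\}}$.

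The second step is to argue that each ingredient in this expression depends only on the pair $(X,\0_X)$ up to homeomorphism. For the normalization count, the parameterized hypothesis together with \thmref{thm:qhomcriterion} gives $|\pi^{-1}(\0_X)|=1+\dim H^{-1}(\Ndot_X)_{\0_X}$, a topological quantity via $\Ndot_X\cong\alpha^*\Ndot_Y$. For the summation index, the identity $D_X=\supp\Ndot_X$ implies that $\alpha$ carries $(D_X,\0_X)$ homeomorphically onto $(D_Y,\0_Y)$ and therefore induces a bijection between irreducible components of $D_X$ at $\0_X$ and of $D_Y$ at $\0_Y$. Finally, for each corresponding pair $(C,C'=\alpha(C))$, the space $\ker\{\Id-h_C\}$ agrees with the monodromy invariants $\HH^{-1}(C\setminus\{\0_X\};\Ndot_X)$ (as in the computation at the end of the proof of \thmref{thm:weightzeropart}), and the isomorphism $\Ndot_X\cong\alpha^*\Ndot_Y$ restricted to the topologically matched punctured curves $C\setminus\{\0_X\}$ and $C'\setminus\{\0_Y\}$ identifies this with $\HH^{-1}(C'\setminus\{\0_Y\};\Ndot_Y)$. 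Assembling these three facts produces the required equality of stalk dimensions and hence the isomorphism $W_0\Q_X^\bullet[2]\cong\alpha^*W_0\Q_Y^\bullet[2]$.

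The hard part will be reconciling the formula of \thmref{thm:weightzeropart}, stated under the blanket assumption $D_X=\Sigma X$, with the present situation where this assumption is explicitly dropped; this is essential because $\Sigma X$ itself is not a local topological invariant, as illustrated by $V(y^2-x^3)\cong\C^2$ in \remref{rem:warning}. The remedy is systematic: throughout the proof of \thmref{thm:weightzeropart} one may replace $\Sigma X$ with $D_X$ without changing any of the hypercohomology computations, since $\Ndot_X$ vanishes on $\Sigma X\setminus D_X$. Once this replacement is made, each of the three quantities in the dimension formula is manifestly read off from $\Ndot_X$ (rather than from $\Sigma X$ or the normalization of $X$ directly), and the topological invariance $\Ndot_X\cong\alpha^*\Ndot_Y$ carries the argument through.
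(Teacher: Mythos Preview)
Your proof is correct, but it proceeds along a genuinely different line from the paper's.

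The paper argues the $i=0$ case \emph{structurally}: it takes the short exact sequence
\[
0 \to W_0\Q_X^\bullet[2] \to \Ndot_X \to i_*\Idot_{D_X}(\hat{l}^*\Ndot_X) \to 0
\]
coming from \thmref{thm:graded}, and shows that the rightmost term is preserved under $\alpha^*$ by verifying directly that $\alpha^*\Idot_{D_Y}({\Ndot_Y}_{|_{\alpha(\U)}})$ satisfies the perverse-sheaf characterization of the intermediate extension on $D_X$ (no perverse sub- or quotient objects supported at $\0_X$). A morphism of short exact sequences then forces the $W_0$ terms to match. By contrast, you observe that both sides are skyscrapers at $\0_X$ and reduce to the numerical identity of stalk dimensions via the explicit formula of \thmref{thm:weightzeropart}, rewriting each ingredient ($|\pi^{-1}(\0_X)|$, the index set $\{C\}$, and each $\ker\{\Id-h_C\}$) as data extracted from $\Ndot_X$ alone.

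What each approach buys: yours is shorter and more concrete, leaning entirely on the dimension formula already proved; it also makes the topological nature of the invariant transparent term by term. The paper's argument is more canonical --- it produces an isomorphism compatible with the maps in the short exact sequence and with the $i=1$ isomorphism, rather than an abstract isomorphism of $\Q$-vector spaces --- and its shape would survive in higher dimensions, where $W_{n-2}$ is no longer a skyscraper and a mere dimension count would not suffice. Your handling of the $D_X$ versus $\Sigma X$ discrepancy is also correct and matches the paper's own caveat: since $\Ndot_X$ is supported on $D_X$, the hypercohomology computations in \thmref{thm:weightzeropart} are unaffected by replacing $\Sigma X$ with $D_X$ throughout.
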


\begin{proof}
%We first note that the claim is trivial for $i =2$, since $\Q_{V(f)} \cong \alpha^* \Q_{V(g)}$ as sheaves on $V(f)$. To show $i=1$, i.e., $\Ndot_{V(f)} \cong \alpha^* \Ndot_{V(g)}$, we note that $\Idot_{V(f)}$ is a topological invariant of $V(f)$, and there is an isomorphism $\Idot_{V(f)} \cong \alpha^*\Idot_{V(g)}$ (see Section 4 of \cite{inthom2}). Consequently, we have a morphism of short exact sequences

%\medskip

Following the above discussion, it remains to prove $W_0 \Q_X^\bullet[2] \cong \alpha^*\Q_Y^\bullet[2]$. To see this, we examine again the short exact sequence (\ref{eqn:gradedses}) obtained in \thmref{thm:graded}:
$$
0 \to W_0 \Q_X^\bullet[2] \to \Ndot_X \to i_*\Idot_{D_X}(\hat{l}^*\Ndot_{X}) \to 0.
$$
We only need to show $\Idot_{D_X}({\Ndot_{X}}_{|_{\U}}) \cong \alpha^*\Idot_{D_Y}({\Ndot_{Y}}_{|_{\alpha(\U)}})$, and the statement for $i=1$ together with the short exact sequence (\ref{eqn:gradedses}) will prove the $i=0$ case (recall again that $\hat{l}: \U \hookrightarrow X$, and $\U$ is the smooth, Zariski open dense subset of $D_X$ over which the normalization $\pi$ is a covering map).

\medskip

From the beginning of this subsection, since $\alpha$ maps $\U \subseteq D_X$ homeomorphically onto $\alpha(\U) = D_Y\backslash \{\0_Y\}$, the claim for $i=1$ gives isomorphisms 
\begin{equation}\label{eqn:smoothMHM}
{\Ndot_{X}}_{|_{\U}} \cong (\alpha^*\Ndot_Y)_{|_{\U}} \cong \alpha^*({\Ndot_Y}_{|_{\alpha(\U)}}).
\end{equation}
By \propref{prop:ndotvhs} we know the right hand side also underlies a polarizable variation of Hodge structure of weight 0, so that 
$$
\Gr_1^W \Q_X^\bullet[2] \cong \Idot_{D_X}({\Ndot_{X}}_{|_{\U}}) \cong \Idot_{D_X}(\alpha^*({\Ndot_{Y}}_{|_{\alpha(\U)}}))
$$
as polarizable Hodge modules of weight $1$ with strict support $D_X$. We finally wish to demonstrate the isomorphism
\begin{equation}\label{eqn:idot}
\Idot_{D_X}(\alpha^*({\Ndot_{Y}}_{|_{\alpha(\U)}})) \cong \alpha^*\Idot_{D_Y}({\Ndot_{Y}}_{|_{\alpha(\U)}}).
\end{equation}
Clearly, by (\ref{eqn:smoothMHM}), both objects restrict to the same underlying local system on $\U$. We are done if we can show that $\alpha^*\Idot_{D_Y}({\Ndot_{Y}}_{|_{\alpha(\U)}})$ has no perverse sub or quotient objects with support contained in $\{\0_X\}$ (these conditions uniquely characterize the intermediate extension up to isomorphism, see e.g. Section 4 of \cite{inthom2}), i.e., 
$$
{}^p H^0(m^!\alpha^*\Idot_{D_Y}({\Ndot_{Y}}_{|_{\alpha(\U)}})) = {}^p H^0(m^*\alpha^*\Idot_{D_Y}({\Ndot_{Y}}_{|_{\alpha(\U)}})) = 0.
$$
Since $\Idot_{D_Y}({\Ndot_{Y}}_{|_{\alpha(\U)}})$ is the intermediate extension of ${\Ndot_{Y}}_{|_{\alpha(\U)}}$ to all of $D_Y = \alpha(D_X)$, and $\alpha$ sends $\0_X$ to $\0_Y$, we trivially have 
$$
{}^p H^0(m_X^*\alpha^*\Idot_{D_Y}({\Ndot_{Y}}_{|_{\alpha(\U)}})) = 0.
$$
Since $m_X : \{\0_X\} \hookrightarrow D_X$ is the inclusion of a point, we know that ${}^p H^0(m_X^!)$ can be replaced with $H^0(m_X^!)$, and thus  
\begin{align*}
{}^p H^0(m_X^!\alpha^*\Idot_{D_Y}({\Ndot_{Y}}_{|_{\alpha(\U)}})) &= H^0(m_X^!\alpha^*\Idot_{D_Y}({\Ndot_{Y}}_{|_{\alpha(\U)}})) \\
&= \hyp^0(D_X,D_X \backslash \{\0_X\};\alpha^*\Idot_{D_Y}({\Ndot_{Y}}_{|_{\alpha(\U)}})) \\
&= \hyp^0(D_Y,D_Y \backslash \{\0_Y\}; \Idot_{D_Y}({\Ndot_{Y}}_{|_{\alpha(\U)}}))
\end{align*}
where the last isomorphism follows from the fact that $\alpha$ maps the pair $(D_X,D_X \backslash \{\0_X\})$ homeomorphically onto $(D_Y, D_Y \backslash \{\0_Y\})$, and taking the long exact sequence in relative hypercohomology with coefficients in $\alpha^*\Idot_{D_Y}({\Ndot_{Y}}_{|_{\alpha(\U)}}))$. As $\Idot_{D_Y}({\Ndot_{Y}}_{|_{\alpha(\U)}})$ has no perverse subobjects with support in $\{\0_Y\}$, we conclude 
$$
\hyp^0(D_Y, D_Y \backslash \{\0_Y\}; \Idot_{D_Y}({\Ndot_{Y}}_{|_{\alpha(\U)}}))=0,
$$
which establishes the desired isomorphism (\ref{eqn:idot}). Consequently, we again have a morphism of short exact sequences 
\begin{equation*}
\xymatrix{
0 \ar[r]& W_0 \Q_{X}^\bullet[2] \ar[r]\ar@{-->}[d]&\Ndot_{X} \ar[d]^{\cong}\ar[r]&i_*\Idot_{D_X}({\Ndot_{X}}_{|_{\U}})  \ar[d]^{\cong}\ar[r]&0\\
0 \ar[r]& \alpha^*W_0\Q_Y^\bullet[2] \ar[r] &\alpha^*\Ndot_Y \ar[r]&\alpha^*i_*\Idot_{D_Y}({\Ndot_{Y}}_{|_{\alpha(\U)}}) \ar[r]&0
}
\end{equation*}
from which we conclude $W_0 \Q_X^\bullet[2] \cong \alpha^*W_0 \Q_Y^\bullet[2]$, and we are done.
\end{proof}

\medskip

\begin{rem}
It is an interesting consequence of \thmref{thm:topinv} that, by \thmref{thm:qhomcriterion}, being a ``parameterized surface" is a purely topological property of the surface (instead of depending on the embedding, or the local ambient topological-type of the surface). 
\end{rem}

If $X$ is a reduced, purely 2-dimensional complex analytic space (not necessarily parameterized), we still know that the short exact sequence
$$
0 \to \Ndot_X \to \Q_X^\bullet[2] \to \Idot_X \to 0
$$
is a topological invariant of $X$. 

\begin{rem}\label{rem:steenbrink}
The classical result of Steenbrink and Stevens on the topological invariance of the weight filtration on the cohomology groups in the compact algebraic setting (Theorem 2.4 of \cite{STEENBRINK198463}) bears a resemblance to our results \thmref{thm:graded} and \thmref{thm:topinv} regarding the weight filtration on the constant sheaf the local analytic setting. In particular, the choice of compact subvariety $\Sigma \subseteq X$ with smooth Zariski-open complement $X \backslash \Sigma$, and the vanishing condition $W_{k-2}H^k(X) = 0$ for all $k$.
\end{rem}
\bigskip

\subsection{Examples and Finitely-Determined Maps}

\begin{exm}\label{exm:whitneyumbrella}
Let $f(x,y,z) = y^2-x^3-zx^2$, so that $V(f)$ is the Whitney umbrella. Then, $\Sigma f = V(x,y)$, and $V(f)$ has (smooth) normalization given by $\pi(u,t) = (u^2-t,u(u^2-t),t)$. Then, it is easy to see that the internal monodromy operator $h_C$ along the component $V(x,y)$ is multiplication by $-1$, so $\ker \{ \Id-h_C\} = 0$. Hence, 
$$
 \Gr_0^W \Q_{V(f)}^\bullet[2] = 0.
$$ 
\end{exm}

\begin{exm}\label{exm:notwhitney}
Let $g(x,y,z) = y^2-x^3-z^2x^2$, so that $\Sigma g = V(x,y)$. Then, $V(g)$ has smooth normalization given by $\pi(u,t)= (u^2-t^2,u(u^2-t^2),t)$, and $\pi^{-1}(\Sigma g) = V(u^2-t^2)$, with internal monodromy operator $h_C$ given by the identity map. Hence,
$$
\Gr_0^W \Q_{V(f)}^\bullet[2] = \Q_{\{\0\}}^\bullet.
$$
\end{exm}

\medskip

\begin{rem}
The above two examples are interesting as they are both (images of) one-parameter unfoldings with isolated instability of the same plane curve singularity, $V(y^2-x^3)$, i.e., a cusp. 

\end{rem}

\medskip

\begin{exm}\label{exm:weirdone}
 Let $f(x, y, z) = xz^2 - y^3$, so that $\Sigma f = V(y,z)$. Then, the normalization $\wt {V(f)}$ is equal to 
$$
\wt {V(f)} = V(u^2-xy,uy-xz,uz-y^2) \subseteq \C^4,
$$
(i.e., the affine cone over the twisted cubic) and the normalization map $\pi$ is induced by the projection $(u,x,y,z) \mapsto (x,y,z)$. By Section 4, \cite{qhomcriterion}, $\wt X$ is a rational homology manifold. The internal monodromy operator $h_C$ on $H^{-1}(\Ndot_{V(f)})_{|_{V(y,z) \backslash \{\0\}}}$ is trivial, so $\ker \{\Id-h_C\} \cong \Q$. Thus, 
$$
\Gr_0^W \Q_{V(f)}^\bullet[2] \cong \Q_{\{\0\}}^\bullet.
$$
\end{exm}

\medskip

\begin{exm}\label{exm:triplepoint}
$f(x,y,z) = xyz$, so $\Sigma f = V(x,y) \cup V(y,z) \cup V(x,z)$. Then, $|\pi^{-1}(\0)| = 3$, and the internal monodromy operators $h_C$ are all the identity. It then follows that 
$$
\Gr_0^W \Q_{V(f)}^\bullet[2] \cong \Q_{\{\0\}}^\bullet.
$$
\end{exm}

\medskip

\begin{exm}\label{exm:images}

Let $\pi : (\C^2,\0) \to (\C^3,\0)$ be a finitely-determined map, and set $\im \pi = V(f)$ for some reduced complex analytic function $f$ on $\C^3$ with $\dim_\0 \Sigma f = 1$ (for a precise definition of this, see \cite{mather},\cite{GAFFNEY1993185}, or \cite{PMIHES_1968__35__127_0}), i.e.., $\pi$ is finite and generically one-to-one (so that $\pi$ is the normalization of $V(f)$), and the generic transverse singularity type of $V(f)$ is that of a Morse function. We also note that $|\pi^{-1}(\0)|=1$ by assumption. Examples of this are \exref{exm:whitneyumbrella}, \exref{exm:notwhitney}, and \exref{exm:triplepoint} (although in this last example we are allowing more irreducible components at the origin).

Consequently, the stalk of the local system $\hat{l}^*\Ndot_{V(f)}$ at any point along any irreducible component of the critical locus $\Sigma f$ of $V(f)$ is $\Q$. This implies further that each of the summands $\ker \{ \Id - h_C\}$ in $W_0 \Q_{V(f)}^\bullet[2]$ must either be $0$ or all of $\Q$, according to whether or not the local system monodromy of $\hat{l}^*\Ndot_{V(f)}$ along $C$ is trivial. 

When $\pi$ is corank one (i.e., $\pi$ is a one-parameter unfolding of a finitely determined map from $(\C,\0)$ to $(\C^2,\0)$), we have seen from \exref{exm:whitneyumbrella} and \exref{exm:notwhitney} it is possible for for two different unfoldings of the same plane curve to have different weight filtrations. 

%\textbf{{\Large MAYBE MOVE TO BEGINNING OF SUBSECTION?} Weight Filtration is thus an $\mathcal{A}$-invariant of the normalization?}

\end{exm}

\bigskip

When $W_0 \Q_{V(f)}^\bullet[2] = W_1 \Q_{V(f)}^\bullet[2] = 0$, i.e., $\Q_{V(f)}^\bullet[2] \cong \Idot_{V(f)}$, we know that $V(f)$ is a \textbf{rational homology manifold}. It would be very interesting to understand what the vanishing of $W_0 \Q_{V(f)}^\bullet[2]$ implies about the topology of $(V(f),\0)$, now that we know this is a topological invariant of $V(f)$ near $\0$. It is trivial to see that, if $V(f)$ is an equisingular deformation of a plane curve singularity, then $W_0 \Q_{V(f)}^\bullet[2] = 0$ (since this implies $\phi_L[-1]\Q_{V(f)}^\bullet[2] = 0$ for generic linear forms $L$ defined near $\0$).

\subsection{Unipotent Vanishing Cycles for Parameterized Surfaces}\label{subsec:surfacesvancycle}

The results of \subsecref{subsec:weightzero} and \secref{sec:vancycle} (i.e., the isomorphism $\Ndot_{V(f)} \cong \ker N (1)$) can now be rephrased as:
\begin{align*}
\Gr_1^W \Q_{V(f)}^\bullet[2] \cong \Gr_1^W \Ndot_{V(f)} &\cong \Gr_3^W \ker N \cong \Idot_{\Sigma f}(\hat{l}^*\Ndot_{V(f)}(-1)) \\
\Gr_0^W \Q_{V(f)}^\bullet[2] \cong \Gr_0^W \Ndot_{V(f)} &\cong \Gr_2^W \ker N \cong V_{\{\0\}}^\bullet (-1) 
\end{align*}
and $\Gr_k ^W \Ndot_{V(f)} = 0$ for $k < 0$, where $N = \frac{1}{2\pi i} \log T_u$ is the logarithm of the unipotent monodromy on $\phi_{f,1}[-1]\Q_{\U}^\bullet[3]$, and $(1)$ is the Tate twist operator.

\medskip

Since $\Q_\W^\bullet[3]$ is a pure Hodge module of weight $3$ (where $\W$ is some open neighborhood of the origin of $\0$ in $\C^3$ on which $f$ is defined), the vanishing cycles $\phi_{f,1}[-1]\Q_\W^\bullet[3]$ is a mixed Hodge module whose weight filtration is the \textbf{monodromy weight filtration} shifted by $3$. We then get the following result for free.

\begin{cor}
For parameterized surfaces $V(f)$, the graded pieces of the weight filtration on $\phi_{f,1}[-1]\Q_{\W}^\bullet[3]$ are as follows:
\begin{align*}
\Gr_{3+k}^W \phi_{f,1}[-1]\Q_\W^\bullet[3] \cong 
\begin{cases}
\Idot_{\Sigma f}(\hat{l}^*\Ndot_{V(f)}(-1)), & \text{ if $k = 0$,} \\
V_{\{\0\}}^\bullet (-1) & \text{ if $k=-1,1$,} \\
0, & \text{ otherwise,}
\end{cases}
\end{align*}
where $V$ is a $\Q$-vector space of dimension 
\begin{align*}
\dim_\Q V = 1-|\pi^{-1}(\0)| + \sum_C \dim \ker\{\Id -h_C\},
\end{align*}
with $\{C\}$ denoting the collection of irreducible components of $\Sigma f$ at $\0$, and for each component $C$, $h_C$ is the (internal) monodromy operator on the local system $H^{-1}(\Ndot_{V(f)})_{|_{C\backslash \{\0\}}}$.  
\end{cor}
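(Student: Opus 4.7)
The plan is to deduce everything from three already-developed ingredients: the identification $\Ndot_{V(f)} \cong \ker N\,(1)$ from \secref{sec:vancycle}, the surface-case weight computations of \thmref{thm:weightzeropart}, \thmref{thm:semispacevspace}, and \corref{cor:actualthm}, and the primitive decomposition controlling the monodromy weight filtration on $M := \phi_{f,1}[-1]\Q_\W^\bullet[3]$.

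First, I would translate the known graded pieces of $\Ndot_{V(f)}$ into those of $\ker N$. Since a Tate twist by $(1)$ shifts the weight filtration down by two, the isomorphism $\Ndot_{V(f)} \cong \ker N\,(1)$ yields $\Gr^W_k \ker N \cong \Gr^W_{k-2}\Ndot_{V(f)}\,(-1)$ as mixed Hodge modules. Combined with the cited surface-case results, the only non-zero graded pieces of $\ker N$ are
\[
\Gr^W_3 \ker N \cong \Idot_{\Sigma f}(\hat{l}^*\Ndot_{V(f)})(-1) \quad\text{and}\quad \Gr^W_2 \ker N \cong V_{\{\0\}}^\bullet\,(-1).
\]

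Second, I would invoke the primitive decomposition for the nilpotent $N$ on $M$, whose monodromy weight filtration is centered at $n+1 = 3$. Each primitive $P_{3+l} := \ker\bigl(N^{l+1}\colon \Gr^W_{3+l} M \to \Gr^W_{3-l-2} M\bigr)$ generates a string $P_{3+l}, NP_{3+l}, \ldots, N^l P_{3+l}$ occupying weights $3+l, 3+l-2, \ldots, 3-l$. Strictness of the weight filtration for morphisms of mixed Hodge modules implies $\Gr^W \ker N = \ker(\Gr^W N)$, and within each string the only element killed by $N$ is the bottom $N^l P_{3+l}$; via the Lefschetz isomorphism $N^l \colon P_{3+l} \xrightarrow{\sim} N^l P_{3+l}$ this gives canonical identifications $\Gr^W_{3-l}\ker N \cong P_{3+l}$ for every $l \geq 0$. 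The first step then forces $P_3 \cong \Idot_{\Sigma f}(\hat{l}^*\Ndot_{V(f)})(-1)$, $P_4 \cong V_{\{\0\}}^\bullet\,(-1)$, and $P_{3+l} = 0$ for all $l \geq 2$.

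Finally, reassembling the two surviving strings yields $\Gr^W_3 M = P_3$, $\Gr^W_4 M = P_4$, and $\Gr^W_2 M = NP_4 \cong P_4$ via the Lefschetz isomorphism (\ref{eqn:hardlefschetz}), with all other graded pieces vanishing, matching the claim. The main conceptual point to justify with care is that the primitive decomposition takes place in the category of polarizable mixed Hodge modules (and not merely of perverse sheaves); this is valid because each $\Gr^W_\bullet M$ is a polarizable pure Hodge module on which $N$ descends to a well-defined nilpotent endomorphism, and standard Tate-twist bookkeeping then ensures the common summand $V_{\{\0\}}^\bullet\,(-1)$ appears at both weights $2$ and $4$ as stated.
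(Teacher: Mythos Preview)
Your proposal is correct and follows essentially the same route as the paper: translate the surface-case computation of $\Gr^W_\bullet \Ndot_{V(f)}$ into $\Gr^W_\bullet \ker N$ via the Tate-twisted identification $\Ndot_{V(f)}\cong\ker N\,(1)$, then use the monodromy weight/Hard Lefschetz structure on $\phi_{f,1}[-1]\Q_\W^\bullet[3]$ to recover the remaining graded pieces. The only difference is one of detail: you run the argument through the primitive decomposition and the strictness identity $\Gr^W\ker N=\ker(\Gr^W N)$, whereas the paper simply remarks that ``the levels below weight $3$ all lie in $\ker N$'' and then applies (\ref{eqn:hardlefschetz}) to get the levels above weight $3$; your version makes explicit why there are no primitives $P_{3+l}$ for $l\geq 2$ and hence why nothing lives outside weights $2,3,4$.
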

 
There are no other pieces of the weight filtration of $\phi_{f,1}[-1]\Q_\W^\bullet[3]$ left unaccounted for; the levels below weight $3$ all lie in $\ker N$, and the ``Hard Lefschetz"-type isomorphism from (\ref{eqn:hardlefschetz}) give the levels above weight $3$. Since the weight filtration of $\ker N$ is concentrated in weights $2$ and $3$, the weight filtration of $\phi_{f,1}[-1]\Q_\U^\bullet[3]$ is concentrated in degrees $2,3,$ and $4$. 

Finally, we also note that there is no extra information contained in the \textbf{primitive graded pieces} or \textbf{Lefschetz decomposition} of the monodromy weight filtration on $\phi_{f,1}[-1]\Q_\W^\bullet[3]$, since 
\begin{align*}
N(\Gr_3^W \phi_{f,1}[-1]\Q_\W^\bullet[3]) &= 0 \text{, and }\\
N(\Gr_4^W \phi_{f,1}[-1]\Q_\W^\bullet[3]) &\cong \Gr_2^W \phi_{f,1}[-1]\Q_\W^\bullet[3].
\end{align*}
In light of \thmref{thm:topinv}, the weight filtration on the unipotent vanishing cycles is an invariant of the local topological-type of $V(f)$ near $\0$. We compare this with classical invariants like the Milnor number, which are invariants of the local \emph{ambient} topological-type of $V(f)$ near $\0$ (i.e., they depend on embedding of $V(f)$ near $\0$ in a smooth ambient space).

\section{Future Directions}\label{sec:futuredirections}

\begin{ques}\label{ques:ndothodge}
The most natural future direction to pursue is that of understanding the \textbf{Hodge filtration} on $\Q_{V(f)}^\bullet[2]$, so that, with \thmref{thm:weightzeropart} and \thmref{thm:semispacevspace}, we would have a complete understanding of $\Q_{V(f)}^\bullet[2]$ as a mixed Hodge module for parameterized surfaces in $\C^3$. 

\smallskip

The simplest class of examples on which to examine this Hodge filtration for parameterized surfaces is that of surfaces $V(f)$ whose transversal type along $\Sigma f$ is that of a quasi-homogeneous function with isolated singularities (this is the case for parameterized \textbf{affine toric surfaces}, and surfaces that are the image of \textbf{finitely determined maps} from $\C^2$ to $\C^3$). In such cases, the Milnor monodromy operator is semi-simple, and thus $\ker N = \phi_f[-1]\Q_\W^\bullet[n+1]$ by a result of Joseph Steenbrink (Theorem 1, \cite{steenbrinkquasihom}); moreover one has explicit generators of the graded pieces $\Gr_p^F \Gr_{p+q}^W H^n(F_{f,p};\C)$, where $F_{f,p}$ denotes the \textbf{Milnor fiber} of $f$ at $p$. We would thus be able to completely determine the ``Hodge theory" of such surfaces.
\end{ques}

\bigskip

\begin{ques}\label{ques:ndotbeta}
One notes that the formula for the dimension of the vector space $V = H^0(\Gr_0^W \Q_{V(f)}^\bullet[2])_\0$ in \thmref{thm:weightzeropart} is very similar to the \textbf{beta invariant}, $\beta_f$, of a hypersurface $V(f)$ with one-dimensional singular locus (defined by David Massey in \cite{betainv}, and further explored by the author and Massey in \cite{specialcases}). 

Does its vanishing have a similar geometric significance to the vanishing of $\beta_f$? What, if any, is the geometric significance of the dimension of $H^0(\Gr_0^W\Q_{V(f)}^\bullet[2])_\0$?

\smallskip

It is possible for $V = 0$; this happens, e.g., for the Whitney umbrella $V(y^2-x^3-zx^2)$ for which $\Sigma f$ is smooth at the origin, but this is not a sufficient condition.  Indeed, the critical locus of $V(xz^2-y^3)$ is also smooth at $\0$, but $V = \Q$ (see \exref{exm:whitneyumbrella} and \exref{exm:weirdone}).

\smallskip

However, we may distinguish these examples by noting that, for generic linear forms $L$, the normalization map $\pi : \widetilde {V(f)} \to V(f)$ is a \textbf{simultaneous normalization} of the the family $\pi_\xi : \widetilde {V(f)} \cap (L \circ \pi)^{-1}(\xi) \to V(f,L-\xi)$ for all $\xi \in \C$ small in the case of the Whitney umbrella, but \textbf{not} for the surface $V(xz^2-y^3)$. Is this true in general? This would make the perverse sheaf $\Gr_0^W \Q_{V(f)}^\bullet[2]$ very relevant to \textbf{L\^{e}'s Conjecture} regarding the equisingularity of parameterized surfaces in $\C^3$:

\medskip

\begin{conj}[L\^{e}, \cite{bobleconj},\cite{1983Ntes}]
Suppose $(V(f),\0) \subseteq (\C^3,\0)$ is a reduced hypersurface with $\dim_\0 \Sigma f = 1$, for which the normalization of $V(f)$ is a bijection. Then, in fact, $V(f)$ is the total space of an equisingular deformation of plane curve singularities.
\end{conj}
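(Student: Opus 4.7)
L\^{e}'s conjecture is a long-standing open problem, and a full proof is well beyond the scope of what this paper's machinery resolves on its own. The plan below uses the comparison complex and the weight filtration developed here to reformulate the conjecture inside the mixed Hodge module setting, reducing it to a simultaneous normalization statement in the spirit of \quesref{ques:ndotbeta}.

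First, I would unpack the hypothesis on $\pi$. By normality, $\wt{V(f)}$ is smooth in codimension one, so its singular locus is discrete; bijectivity forces these isolated points to map onto isolated points of $V(f)$. Assuming (as the author does throughout) that $\wt{V(f)}$ is a $\Q$-homology manifold---this should follow from the hypothesis by a direct local analysis of normal surface singularities with manifold-like links, but is already a subtle point---$V(f)$ is a parameterized surface in the sense of \defref{def:paramspace}. Bijectivity of $\pi$ then gives $|\pi^{-1}(p)|=1$ for every $p$, so $\dim H^{-1}(\Ndot_{V(f)})_p = 0$ everywhere by \remref{rem:saitodurfee}, and \thmref{thm:qhomcriterion} forces $\Ndot_{V(f)} = 0$. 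In particular, $\Q_{V(f)}^\bullet[2] \cong \Idot_{V(f)}$ and $V(f)$ itself is a $\Q$-homology manifold, so the entire weight filtration on $\Q_{V(f)}^\bullet[2]$ collapses to pure weight $2$.

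Next, I would pick a generic linear form $L$ vanishing at $\0$ and study the slice family $\pi_\xi : \wt{V(f)} \cap (L\circ \pi)^{-1}(\xi) \to V(f, L-\xi)$. The goal is to show that $\pi$ is a \textbf{simultaneous normalization} of this family for small $\xi$, i.e., that each $\pi_\xi$ is the normalization of a reduced plane curve. Bijectivity of $\pi$ restricts to bijectivity of each $\pi_\xi$ for generic $L$, and the $\Q$-homology manifold property of the total space together with genericity of $L$ should yield flatness and reducedness of the family. Once simultaneous normalization is in place, Teissier's theorem identifying simultaneous normalization with $\mu$-constancy (hence with equisingularity) for families of plane curves closes the argument.

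The main obstacle I expect is the last link in this chain: extracting \textbf{constancy of the Milnor number} of the transverse plane curves along $\Sigma f$ from the vanishing of $\Ndot_{V(f)}$ alone. The conjecture is hard precisely because it bridges a purely topological invariant---bijectivity of $\pi$, which by \thmref{thm:topinv} depends only on the local topological type of $V(f)$---and a genuinely analytic equisingularity condition. A promising intermediate target, very much in the spirit of this paper, is to compute the unipotent vanishing cycles $\phi_{L,1}[-1]\Q_{V(f)}^\bullet[2]$ for generic $L$ from the mixed Hodge module description in \subsecref{subsec:surfacesvancycle}, and to show that their vanishing below the middle weight is equivalent to $\mu$-constancy of the transverse plane curves. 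Even so, the final step of deducing analytic $\mu$-constancy from the mixed Hodge structure alone seems to require genuinely new input beyond what is developed here.
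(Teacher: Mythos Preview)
The paper contains no proof of this statement: it is L\^{e}'s conjecture, presented explicitly as an open problem in \quesref{ques:ndotbeta}. There is therefore nothing to compare your attempt against. You recognize this yourself in your first sentence, and what you have written is a research outline, not a proof.

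That said, your outline tracks the paper's own framing closely. The observation that bijectivity of $\pi$ forces $\Ndot_{V(f)}=0$, hence $\Q_{V(f)}^\bullet[2]\cong\Idot_{V(f)}$ is pure of weight $2$, is exactly what the paper records immediately after stating the conjecture. Your proposed route through simultaneous normalization and Teissier's criterion is the natural one, and the paper gestures at the same idea in the paragraph preceding the conjecture. You also correctly locate the genuine obstruction: nothing in the weight-filtration machinery developed here produces analytic $\mu$-constancy of the transverse slices from the purely topological input $\Ndot_{V(f)}=0$. That bridge is the content of the conjecture, and it remains missing.

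One minor caution: you write that the $\Q$-homology manifold property of $\wt{V(f)}$ ``should follow from the hypothesis by a direct local analysis.'' This is not part of the conjecture's hypotheses and is not obviously a consequence of bijectivity alone; the paper simply works within the parameterized-surface framework rather than deducing it. Treat that step as an additional assumption, not something to be proved along the way.
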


\medskip

Thus, L\^{e}'s Conjecture is concerned with parameterized surfaces for which $\Ndot_{V(f)} = 0$, i.e., where $\Q_{V(f)}^\bullet[2] \cong \Idot_{V(f)}$ is a semi-simple perverse sheaf (or, more generally, a pure Hodge module of weight $2$). In particular, this would obvious imply $\Gr_0^W \Q_{V(f)}^\bullet[2] = 0$.
\end{ques}

% \subsection{Interpretation via Invariant Jordan Blocks of the monodromy in Surface Case}
 
% Let $V(f) \subseteq \C^{3}$ have $\dim_\0 \Sigma f = 1$, and let $L$ be a generic linear form for which $\dim_\0 \Sigma \left (f_{|_{V(L)}} \right ) \leq 0$. Then, since $f_\xi := f_{|_{V(L-\xi)}}$ defines a plane curve singularity inside $V(L-\xi)$ for $|\xi | \ll 1$, it is well known that the stalks of the intersection cohomology complex $\Idot_{V(f_\xi)} \cong {\Idot_{V(f)}}_{|_{V(L-\xi)}}[-1]$ at $p \in \Sigma f_\xi$ satisfy
% $$
% H^j(\Idot_{V(f_\xi)})_p \cong \left \{ \begin{matrix} \Q^{J_1(f_\xi)+1} && \text{ if $j = 0$} \\ 0 && \text{ if $j \neq 0$} \end{matrix} \right .
% $$
% where $J_1(f_\xi)$ denotes the number of Jordan blocks of the eigenvalue 1 for the Milnor monodromy action on the cohomology group $H^1(F_{f_\xi,p};\Q)$ (see e.g., \textbf{[REFERENCE, MILNOR]}, \textbf{[REFERENCE, TAKEUCHI}]). It is then immediate from the fundamental short exact sequence that $\dim H^0(\Ndot_{V(f_\xi)})_p = J_1(f_\xi)$. 
 
% \medskip
 
% Thus, for the total surface $V(f)$, $\dim H^{-1}(\Ndot_{V(f)})_\0 = J_1(f_0)$ and along an irreducible component $C$ of $\Sigma f$, the stalks $H^{-1}(\Ndot_{V(f)})_p$ have generic dimension $J_1(f_\xi)_C$. We can then identify the internal monodromy operator $h_C$ on $H^{-1}(\Ndot_{V(f)})_p$ 
 %\bigskip

\printbibliography

\end{document}